\newtheorem{thm}{Theorem}
\newtheorem{defn}{Definition}
\newtheorem{lemma}{Lemma}
\newtheorem{pro}{Proposition}
\newtheorem{rk}{Remark}
\newtheorem{ex}{Example}
\numberwithin{equation}{section} \setcounter{tocdepth}{1}
\newcommand{\bea}{\begin{eqnarray}}
\newcommand{\eea}{\end{eqnarray}}
\newcommand{\Z}{\mathbb{Z}}
\def\O{\Omega}
\def\O{\Omega}
\def\Z{\mathbb{Z}}
\begin{document}
\title [A non-linear second-order difference equation]
{A non-linear second-order difference equation related to   Gibbs measures of a SOS model}

\author {U.A. Rozikov}

\address{ U.A. Rozikov$^{a,b,c}$\begin{itemize}
 \item[$^a$] V.I.Romanovskiy Institute of Mathematics,  9, Universitet str., 100174, Tashkent, Uzbekistan;
\item[$^b$] AKFA University,
264, Milliy Bog street,	Yangiobod QFY, Barkamol MFY,
Kibray district, 111221, Tashkent region, Uzbekistan;
\item[$^c$] National University of Uzbekistan,  4, Universitet str., 100174, Tashkent, Uzbekistan.
\end{itemize}}
\email{rozikovu@yandex.ru}

\begin{abstract}
 For the SOS (solid-on-solid) model with an external field and with spin values from the set of all integers
 on a Cayley tree each (gradient) Gibbs measure corresponds to a boundary law (an infinite-dimensional vector function defined on vertices of the Cayley tree) satisfying a non-linear functional equation. Recently some translation-invariant and  height-periodic (non-normalisable) solutions to the equation are found. 
Here our aim is to find non-height-periodic and non-normalisable boundary laws for the SOS model. By such a solution one can construct a non-probability Gibbs measure. We find explicitly several non-normalisable boundary laws. Moreover, we reduce the problem to solving of a non-linear, second-order difference equation. We give analytic and numerical analysis of the difference equation. 

\end{abstract}
\maketitle

{\bf Mathematics Subject Classifications (2010).} 82B26 (primary);
60K35 (secondary)

{\bf{Key words.}} {\em SOS model, configuration, Cayley tree,
Gibbs measure, gradient Gibbs measures, boundary law}.

\section{Introduction}
In statistical physics for models with non-compact set of spin values the set of Gibbs measures may be empty. In such a situation some gradient or non-probability Gibbs measures (GMs) may exist (for detailed motivations and very recent results see \cite{BiKo}, \cite{BEvE}, \cite{EK}, \cite{FV}, \cite{HR}, \cite{HKLR}, \cite{HKa}, \cite{HKb}, \cite{Hphd}, \cite{KM}, \cite{KS}, \cite{LT}, \cite{Sh}, \cite{Z1}).   
It is known that for Hamiltonians given on a tree there is an one-to-one correspondence 
between (resp. gradient) Gibbs measures and (resp. non-normalisable, height-periodic) normalisable boundary laws
(see  \cite{HKLR},  \cite{HKa}, \cite{HKb}, \cite{KS}, \cite{Rjsp}, \cite{Z1}). These papers mainly devoted to models with a countable set of spin values and nearest-neighboring interactions. For such a model the boundary law is a tree-indexed family of infinite-dimensional vectors with positive coordinates. 
These vectors satisfy a non-linear equation, which is equivalent to a compatibility condition of finite-dimensional Gibbs distributions.   

By the non-linearity, infinite-dimensionality  and tree-indexness of the equation its analysis is very complicated. In above mentioned papers, for a class of Hamiltonians,  this equation mainly solved in class of translation-invariant (i.e. the vectors do not depend on vertices of the tree) and  height-periodic vectors (i.e. the coordinates of an infinite-dimensional vector is periodic). 

This paper is devoted to Gibbs measures of a SOS model with a countable set of spin values on Cayley trees.
Our aim is to find non-periodic and non-normalisable boundary laws for the SOS model and give corresponding to them non-probability Gibbs measures. First we find explicitly several non-normalisable boundary laws and then reduce the problem to solving of a non-linear, second-order difference equation. We give analytic and numerical analysis of the difference equation.

\subsection{Definitions and useful facts}
We consider SOS model, where spin-configuration $\omega$ is a function from the
vertices of the Cayley tree $\Gamma^k=(V, \vec L)$ (of order $k\geq 2$) to the set $\Z$ of integer numbers, where
$V$ is the set of vertices and $\vec L$ is the set of oriented edges (bonds) of the tree
(see \cite{Ro} and \cite{Robp} for properties of the Cayley tree and theory of Gibbs measures for models with finitely many spin values on trees).

For any configuration  $\omega = (\omega(x))_{x \in V} \in \mathbb Z^V$ and edge $b = \langle x,y \rangle$ of $\Gamma^k$
the \textit{difference} along the edge $b$ is given by $\nabla \omega_b = \omega(y) - \omega(x)$, where $\omega_b$ is a configuration on  $b = \langle x,y \rangle$, i.e., $\omega_b=\{\omega(x), \omega(y)\}$.  The configuration $\nabla \omega$ is called the \textit{gradient field} of $\omega$ (see \cite{KS}, \cite{K}).

The gradient spin variables are now defined by $\eta_{\langle x,y \rangle} = \omega(y) - \omega(x)$ for each $\langle x,y \rangle$.

The space of \textit{gradient configurations} is denoted by $\O^\nabla$. The measurable structure on the space $\Omega^{\nabla}$ is given by $\sigma$-algebra $$\mathcal{F}^\nabla:=\sigma(\{ \eta_b \, \vert \, b \in \vec L \}).$$

Let $\mathcal{T}_{\Lambda}^{\nabla}$ be the sigma-algebra of gradient configurations outside of the finite volume $\Lambda$ is generated by all gradient variables outside of $\Lambda$ and the relative height-difference on the boundary of $\Lambda$. 

For  $\omega:x\in V\mapsto \omega(x)\in \mathbb Z$, consider Hamiltonian of SOS model with external field  $\Phi:\mathbb Z\to \mathbb R$, i.e.,
\begin{equation}\label{nu1}
	H(\omega)=-J\sum_{\langle x,y\rangle: \atop x,y\in V}
	|\omega(x)-\omega(y)|+\sum_{x\in V}\Phi(\omega(x)),\end{equation}
where $J>0$.

For $b = \langle x,y \rangle$ and $\omega_b=\{\omega(x), \omega(y)\}$ define transfer operator 
\begin{equation}\label{Qd}
	Q_b(\omega_b) =\exp\left(- \beta J |\omega(x)-\omega(y)|+{1\over k+1}(\Phi(\omega(x))+ \Phi(\omega(y)))\right),
\end{equation}
and introduce the Markov (Gibbsian) specification as
$$
\gamma_\Lambda^\Phi(\sigma_\Lambda = \omega_\Lambda | \omega) = (Z_\Lambda^\Phi)(\omega)^{-1} \prod_{b \cap \Lambda \neq \emptyset} Q_b(\omega_b).
$$

If external field $\Phi(\cdot)\equiv 0$ then for any bond $b=\langle x,y \rangle$ the transfer operator $Q_b(\omega_b)$ is
a function of gradient spin variable $\zeta_b=\omega(y)-\omega(x)$, i.e., is 
a \textit{gradient interaction potential}.

\emph{Boundary law} (see \cite{Ge}, \cite{Z1}) which allow to describe the subset $\mathcal{G}(\gamma)$ of all Gibbs measures is defined as follows.

\begin{defn}\label{def} 
	
	\begin{itemize}
		\item	A family of vectors $\{ l_{xy} \}_{\langle x,y \rangle \in \vec L}$ with $l_{xy}=\left(l_{xy}(i) : i\in \Z\right) \in (0, \infty)^\Z$ is called a {\em boundary law for the transfer operators $\{ Q_b\}_{b \in \vec L}$} if for each $\langle x,y \rangle \in \vec L$ there exists a constant  $c_{xy}>0$ such that the consistency equation
		\begin{equation}\label{eq:bl}
			l_{xy}(i) = c_{xy} \prod_{z \in \partial x \setminus \{y \}} \sum_{j \in \Z} Q_{zx}(i,j) l_{zx}(j)
		\end{equation}
		holds for every $i \in \Z$, where $\partial x$ is the set of all nearest-neighbors of $x$. 
		\item  A boundary law $l$ is said to be {\em normalisable} if and only if
		\begin{equation}\label{Norm}
			\sum_{i \in \Z} \Big( \prod_{z \in \partial x} \sum_{j \in \Z} Q_{zx}(i,j) l_{zx}(j) \Big) < \infty
		\end{equation} at any $x \in V$.
		
		\item 	A boundary law 	is called {\em $q$-height-periodic} (or $q$-periodic) if $l_{xy} (i + q) = l_{xy}(i)$
		for every oriented edge $\langle x,y \rangle \in \vec L$ and each $i \in \Z$.
	\end{itemize}
\end{defn}

\begin{defn}  The gradient Gibbs specification is defined as the family of probability kernels $\left(\gamma_{\Lambda}^{\prime}\right)_{\Lambda \Subset V}$ from $\left(\Omega^{\nabla}, \mathcal{T}_{\Lambda}^{\nabla}\right)$ to $\left(\Omega^{\nabla}, \mathcal{F}^{\nabla}\right)$ such that
	$$
	\int F(\rho) \gamma_{\Lambda}^{\prime}(d \rho \mid \zeta)=\int F(\nabla \varphi) \gamma_{\Lambda}(d \varphi \mid \omega)
	$$
	for all bounded $\mathcal{F}^{\nabla}$-measurable functions $F$, where $\omega \in \Omega$ is any height-configuration with $\nabla \omega=\zeta$.
\end{defn}
\begin{defn} 
	A probability measure $\nu$ on $\Omega^{\nabla}$ is called a Gradient Gibbs Measure (GGM) if it satisfies the $DLR$ equation
	$$
	\int \nu(d \zeta) F(\zeta)=\int \nu(d \zeta) \int \gamma_{\Lambda}^{\prime}(d \tilde{\zeta} \mid \zeta) F(\tilde{\zeta})
	$$
	for every finite $\Lambda \subset V$ and for all bounded functions $F$ on $\Omega^{\nabla}$. 
\end{defn}

\begin{rk} Note that the problem of finding all boundary laws, i.e. all solutions of (\ref{eq:bl}) is by no means easy. Moreover, there is no theory of functional equations with the unknown	function defined on a tree. For a solution to (\ref{eq:bl}), i.e., a boundary law depending on its normalisablity and periodicity the following facts are known:

\begin{itemize}
	\item[F1.] There is an one-to-one correspondence between normalisable boundary laws 
	and Gibbs measures (also called tree-indexed Markov chains)   \cite{Z1}. 
	
	\item[F2.] Any $q$-height-periodic boundary law of SOS model (with zero external field) defines a translation invariant GGM (see \cite{KS} for the general formulation of this fact). 	
	
	As shown in (see \cite{HKLR} and \cite{HKb}) if a height-periodic boundary law is obtained from another one by a cyclic shift, then it leads to the same GGM.  Moreover, Theorem 5 in \cite{HKb}	guarantees that distinct (up to cyclic shift and multiplication by positive constants) boundary laws leads to distinct GGMs.
	
	More general theorem is given in \cite{HKa}, which says that 
	for any summable transfer operator $Q$ (in particular SOS model) and any degree $k\geq 2$ of the Cayley tree there is a finite period $q_0(k)$ such that for all $q\geq q_0(k)$ there are non-translation-invariant GGMs of period $q$.
		
Note that periodic boundary law is non-normalisable.  	In \cite{GRH}, \cite{HKLR},  \cite{HKa}, \cite{HKb}, \cite{KS}, \cite{Rjsp} some height-periodic  non-normalisable boundary laws are used to give GGMs. 

\item[F3.] Assume now we have a boundary law which is non-normalisable (and non-height-periodic in case of zero external field). What kind of measure can be constructed by such a boundary law? One possibility is to give a non-probability Gibbs measure (as already given in \cite{KM}). Namely, each {\it non-normalisable} boundary law $(l_{xy})_{x,y}$ for $(Q_b)_{b \in L}$ defines a non-probability Gibbs measure $\mu$ (having values in the extended real number line)\footnote{For general definition of measure see: https://en.wikipedia.org/wiki/Measure$_-$(mathematics)} via the equation given for any connected set $\Lambda \subset V$
\begin{equation}\label{BoundMC}
	\mu(\sigma_{\Lambda \cup \partial \Lambda}=\omega_{\Lambda \cup \partial \Lambda}) = \prod_{y \in \partial \Lambda} l_{y y_\Lambda}(\omega_y) \prod_{b \cap \Lambda \neq \emptyset} Q_b(\omega_b),
\end{equation}
where for any $y \in \partial \Lambda$, $y_\Lambda$ denotes the unique nearest neighbor of $y$ in $\Lambda$.

\end{itemize}
 \end{rk}

In this paper we study (gradient) Gibbs measures for SOS model by using the facts {\rm F1-F3}.

\subsection{The boundary law equation for the SOS model with an external field.}

In this paper we are interested to the translation-invariant boundary laws of the SOS model with external field, which are infinite-dimensional vectors of the form ${\bf z}=(z_i,\ i\in \mathbb Z)\in \mathbb R^\infty_+$, coordinates of it, after simplification (denoting $h(j)z_j$ by $z_j$) and normalization at $i=0$,  satisfy the following system of equations
\begin{equation}\label{di1}
	z_i=\frac{h(i)}{h(0)}\left({\theta^{|i|}+
		\sum_{j\in \mathbb Z_0}\theta^{|i-j|}z_j
		\over
		1+\sum_{j\in \mathbb Z_0}\theta^{|j|}z_j}\right)^k, \quad i\in\mathbb Z_0.
\end{equation}
Here $\mathbb Z_0=\mathbb Z\setminus\{0\}$, $h(i)=\exp(\Phi(i))$, $i\in \mathbb Z$,   $\theta=\exp(-J\beta)$. Note that $z_0=1$.

 
Let $\mathbf z(\theta)=(z_i=z_i(\theta)>0, i\in \mathbb Z_0)$ be a solution to (\ref{di1}).   Denote
\begin{equation}\label{lr}
	l_i\equiv l_i(\theta)=\sum_{j=-\infty}^{-1}\theta^{|i-j|}z_j, \quad
	r_i\equiv r_i(\theta)=\sum_{j=1}^{\infty}\theta^{|i-j|}z_j, \quad i\in\mathbb Z_0.
\end{equation}
It is clear that each $l_i$ and $r_i$ can be a finite positive number or $+\infty$.

\begin{lemma}\label{l1} \cite{HKLR} For each $i\in \mathbb Z_0$ we have
	\begin{itemize}
		\item $l_i<+\infty$ if and only if $l_0<+\infty$;
		
		\item $r_i<+\infty$ if and only if $r_0<+\infty$.
	\end{itemize}
\end{lemma}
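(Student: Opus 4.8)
The plan is to prove the two bullets separately but symmetrically; I will describe the first one (the claim for $l_i$), since the second is identical after replacing $j \to -j$ in the summation. Fix $i \in \mathbb{Z}_0$. The idea is that $l_i$ and $l_0$ differ only by comparing the weights $\theta^{|i-j|}$ and $\theta^{|j|}$ as $j \to -\infty$, and for $j$ very negative these two exponents differ by the fixed constant $i$, so the corresponding tails are comparable up to a fixed multiplicative factor.

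The key step is the following two-sided bound. For every $j \le -1$ one has the elementary triangle-type inequalities
\begin{equation*}
|j| - |i| \le |i-j| \le |j| + |i|,
\end{equation*}
hence, since $0 < \theta < 1$,
\begin{equation*}
\theta^{|i|}\,\theta^{|j|} \le \theta^{|i-j|} \le \theta^{-|i|}\,\theta^{|j|}.
\end{equation*}
Multiplying by $z_j > 0$ and summing over $j = -\infty$ to $-1$ gives
\begin{equation*}
\theta^{|i|}\, l_0 \le l_i \le \theta^{-|i|}\, l_0.
\end{equation*}
From the left inequality, $l_i < +\infty$ forces $\theta^{|i|} l_0 < +\infty$, i.e. $l_0 < +\infty$ (as $\theta^{|i|}$ is a fixed positive number); from the right inequality, $l_0 < +\infty$ forces $l_i \le \theta^{-|i|} l_0 < +\infty$. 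This establishes the equivalence $l_i < +\infty \iff l_0 < +\infty$.

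The second bullet follows verbatim: for $j \ge 1$ the same inequalities $|j|-|i| \le |i-j| \le |j|+|i|$ hold, yielding $\theta^{|i|} r_0 \le r_i \le \theta^{-|i|} r_0$, and hence $r_i < +\infty \iff r_0 < +\infty$. I do not expect any genuine obstacle here; the only thing to be slightly careful about is that all $z_j$ are strictly positive (so no cancellation can occur in the sums) and that $\theta \in (0,1)$ is a fixed finite constant independent of $j$, which is exactly what makes the uniform comparison $\theta^{|i|} \le \theta^{|i-j|}/\theta^{|j|} \le \theta^{-|i|}$ legitimate term by term.
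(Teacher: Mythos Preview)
The paper does not actually give its own proof of this lemma; it is simply quoted from \cite{HKLR}. Your argument is correct and is essentially the natural one: the two-sided bound $\theta^{|i|}\,l_0 \le l_i \le \theta^{-|i|}\,l_0$ (and similarly for $r_i$) obtained from $\bigl||j|-|i|\bigr| \le |i-j| \le |j|+|i|$ gives the equivalence immediately.

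One minor remark: you explicitly invoke $0<\theta<1$ to pass from the inequalities on exponents to inequalities on powers. In the paper $\theta=\exp(-J\beta)$ with $J>0$, so this is the intended regime, but note that Section~2 also discusses $\theta>1$ hypothetically. The same comparison works there with the roles of $\theta^{|i|}$ and $\theta^{-|i|}$ swapped, so the conclusion is unchanged; it would cost you one extra sentence to cover both cases (or simply write the bound as $\min(\theta,\theta^{-1})^{|i|}\,l_0 \le l_i \le \max(\theta,\theta^{-1})^{|i|}\,l_0$, valid for all $\theta>0$).
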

\section{Non-zero external field: non-probability Gibbs measures}

In \cite{HR} for the case of 4-periodic non-zero external field and  $k=2$ some 4-periodic boundary laws are found. In this  section we assume $\sum_{i\in \mathbb Z}h(i)=1$, i.e., $h$ is a probability measure and shall describe solutions of (\ref{di1}) with property  $l_0=+\infty$, or $r_0=+\infty$.

\subsection{The set of solutions with $l_0=+\infty$, $r_0<+\infty$}

In this case, using Lemma \ref{l1} from (\ref{di1}) we get
\begin{equation}\label{nu12}
	z_i=\frac{h(i)}{h(0)} \theta^{ik}, \quad \ i\in\mathbb Z_0.
\end{equation}
We should have
\begin{equation}\label{lin}
	+\infty=l_0=\sum_{j=-\infty}^{-1}\theta^{-j}z_j={1\over h(0)}\sum_{j=1}^\infty \theta^{-j(k-1)}h(-j),
\end{equation}
\begin{equation}\label{rfi}
	+\infty>r_0=\sum_{j=1}^\infty\theta^{j}z_j={1\over h(0)}\sum_{j=1}^\infty \theta^{j(k+1)}h(j).
\end{equation}
Thus if the measure $h$ and $\theta$ satisfy conditions (\ref{lin}) and (\ref{rfi}) then the system (\ref{di1}) has a solution (\ref{nu12}).
\begin{rk} Here we shall give some examples:
	\begin{itemize}
		\item Case: $\theta<1$. It is easy to see that if $\theta<1$ then the condition (\ref{rfi}) is satisfied for any (probability) measure $h$. One can (for $\theta<1$) choose a probability measure $h$ which  satisfies (\ref{lin}). For example, take $h$ as
		\begin{equation}
			\label{nuj}
			h(j)={1-\theta\over 1+\theta}\theta^{|j|},\quad j\in \mathbb Z
		\end{equation}
		then the condition (\ref{lin}) is satisfied for any $k\geq 2$. For this example the solution (\ref{nu12}) is
		\begin{equation}\label{s1}
			z_i=\theta^{ik+|i|}, \quad i\in\mathbb Z_0.
		\end{equation}
		\item
		Case: $\theta>1$. One can choose a measure $h$ such that (\ref{rfi}) is satisfied (take, for example, $h(j)={\theta^{k+2}-1\over \theta^{k+2}+1}\theta^{-|j|(k+2)}$). But the condition (\ref{lin}) is never satisfied for $\theta>1$.
	\end{itemize}
\end{rk}
\subsection{The set of solutions with  $l_0<+\infty$, $r_0=+\infty$}
We have
\begin{equation}\label{nu12b}
	z_i=\frac{h(i)}{h(0)} \theta^{-ik}, \quad \ i\in\mathbb Z_0.
\end{equation}
We must have
\begin{equation}\label{lfi}
	+\infty>l_0={1\over h(0)}\sum_{j=1}^\infty \theta^{j(k+1)}h(-j),
\end{equation}
\begin{equation}\label{rin}
	+\infty=r_0={1\over h(0)}\sum_{j=1}^\infty \theta^{-j(k-1)}h(j).
\end{equation}
Hence if the measure $h$ and $\theta$ satisfy conditions (\ref{lfi}) and (\ref{rin}) then the system (\ref{di1}) has a solution (\ref{nu12b}).

Here we shall give some examples:
\begin{ex}
	\begin{itemize}
		\item Case: $\theta<1$. It is easy to see that if $\theta<1$ then the condition (\ref{lfi}) is satisfied for any $h$. One can choose a probability measure $h$ which  satisfies (\ref{rin}). For example, if $h$ as (\ref{nuj}) then the condition (\ref{rin}) is satisfied for any $k\geq 2$. For this example the solution (\ref{nu12b}) is
		\begin{equation}\label{s2}
			z_i=\theta^{-ik+|i|}, \quad i\in\mathbb Z_0.
		\end{equation}
		\item
		Case: $\theta>1$. One can choose a measure $h$ such that (\ref{lfi}) is satisfied (take, for example, $h(j)={\theta^{k+2}-1\over \theta^{k+2}+1}\theta^{-|j|(k+2)}$). But the condition (\ref{rin}) is never satisfied for $\theta>1$.
		\item From above-mentioned results it follows that if $\theta<1$ then for the {\it fixed} measure  $h$ given by (\ref{nuj}) we have {\it two} solutions
		(\ref{s1}) and (\ref{s2}).
	\end{itemize}
\end{ex}
\subsection{The set of solutions with  $l_0=+\infty$, $r_0=+\infty$}
In this case we denote $\rho=r_0/l_0$, it is clear that $0\leq\rho\leq+\infty$.

From (\ref{di1}) we get
\begin{equation}\label{nu12c}
	z_i=\frac{h(i)}{h(0)} \left({\theta^i+\theta^{-i}\rho\over 1+\rho}\right)^k, \quad \ i\in\mathbb Z_0.
\end{equation}

{\it Case: $\rho=0$}. For $\rho=0$ we must have
\begin{equation}\label{lin0}
	+\infty=l_0={1\over h(0)}\sum_{j=1}^\infty \theta^{-j(k-1)}h(-j),
\end{equation}
\begin{equation}\label{rin0}
	+\infty=r_0={1\over h(0)}\sum_{j=1}^\infty \theta^{j(k+1)}h(j).
\end{equation}
It is easy to see that the conditions (\ref{lin0}) and (\ref{rin0}) can
not be satisfied simultaneously. Indeed, for $\theta<1$ (resp. $\theta>1$) one has
$r_0$ (resp. $l_0$) is finite for any probability measure $h$. Thus there is no any solution with $\rho=0$.

{\it Case: $\rho=\infty$.}  This case is similar to the case $\rho=0$ and there is no any solution with $\rho=\infty$.

{\it Case: $0<\rho<\infty$.} In this case for $z_i$ given by (\ref{nu12c}) we should have
\begin{equation}\label{lin1}
	+\infty=l_0={1\over h(0)(1+\rho)^k}\sum_{j=1}^\infty \theta^j(\theta^{-j}+\theta^j\rho)^kh(-j),
\end{equation}
\begin{equation}\label{rin1}
	+\infty=r_0={1\over h(0)(1+\rho)^k}\sum_{j=1}^\infty \theta^j(\theta^j+\theta^{-j}\rho)^kh(j),
\end{equation}
\begin{equation}\label{tau}
	{\sum_{j=1}^\infty \theta^j(\theta^j+\theta^{-j}\rho)^kh(j)\over
		\sum_{j=1}^\infty \theta^j(\theta^{-j}+\theta^j\rho)^kh(-j)}=\rho.
\end{equation}
Using the binomial formula we rewrite condition (\ref{lin1}) in the following form
$$+\infty=\sum_{j=1}^\infty \theta^j(\theta^{-j}+\theta^j\rho)^kh(-j)=\sum_{s=0}^k{k\choose s }\rho^s\sum_{j=1}^\infty \theta^{-j(k-2s-1)}h(-j).$$
This condition is equivalent to
\begin{equation}
	\label{scl}
	\sum_{j=1}^\infty \theta^{-j(k-2s-1)}h(-j)=+\infty, \quad \mbox{for some} \quad s\in \{0,1,\dots,k\}.
\end{equation}
Similarly, we have that the condition (\ref{rin1}) is equivalent to the following
\begin{equation}
	\label{scr}
	\sum_{j=1}^\infty \theta^{j(k-2t+1)}h(j)=+\infty, \quad \mbox{for some} \quad t\in \{0,1,\dots,k\}.
\end{equation}
Thus if conditions (\ref{tau})-(\ref{scr}) are satisfied then the system (\ref{di1}) has a solution of the form (\ref{nu12c}).

\begin{rk}\label{R} If $h$ is symmetric, i.e. $h(j)=h(-j)$ then the condition (\ref{tau}) is satisfied with $\rho=1$ for any $\theta$. Moreover, if $\rho$ is a solution to (\ref{tau}) (for the symmetric measure) then $1/\rho$ is also a solution to (\ref{tau}).
\end{rk}

\begin{ex} Consider the case $\theta<1$, $k\geq 2$ and the measure (\ref{nuj}) then it is easy to see that the conditions (\ref{scl}) and (\ref{scr}) are satisfied. Indeed, in (\ref{scl}) (resp. (\ref{scr})) we take any $s$ (resp. $t$) with $s\leq {k-2\over 2}$ (resp. $t\geq {k+2\over 2}$). Since the measure (\ref{nuj}) is symmetric by Remark \ref{R} we know that the condition (\ref{tau}) is also satisfied.  Thus for measure (\ref{nuj}) and $\theta<1$ we have solution
	\begin{equation}\label{s3}
		z_i=\theta^{|i|} \left({\theta^i+\theta^{-i}\over 2}\right)^k, \quad \ i\in\mathbb Z_0.
	\end{equation}
\end{ex}

As mentioned above the solutions given in this section do not define a (gradient) Gibbs measure, since they are not normalisable and non-height-periodic. But by {\rm F3} the above mentioned solutions define non-probability Gibbs measures. We summarize above obtained examples of solutions in the following.
\begin{thm} If parameters of SOS model (\ref{nu1}) are such that 
	$$ \theta=\exp(-J\beta)<1, \ \   h(i)=\exp(\Phi(i))={1-\theta\over 1+\theta}\theta^{|i|},   i\in \mathbb Z$$
	then there are at least three non-probability Gibbs measures corresponding to explicit solutions (\ref{s1}), (\ref{s2}), (\ref{s3}).
\end{thm}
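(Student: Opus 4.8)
The plan is to verify directly that the three explicit vectors $(\ref{s1})$, $(\ref{s2})$, $(\ref{s3})$ are honest solutions of the boundary-law system $(\ref{di1})$ under the stated choice of parameters, and that each of them is non-normalisable; then the existence of the corresponding non-probability Gibbs measures follows from fact F3 (via formula $(\ref{BoundMC})$), and their distinctness follows from the fact that the three solutions are genuinely different vectors in $\R^\infty_+$. Since all the analytic work has already been assembled in the preceding subsections, the proof is essentially a bookkeeping exercise: I would simply assemble the three cases.

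\emph{First}, for solution $(\ref{s1})$: here $l_0=+\infty$, $r_0<+\infty$, so I invoke the derivation of $(\ref{nu12})$ from Lemma~\ref{l1}, observe that with $h(j)=\tfrac{1-\theta}{1+\theta}\theta^{|j|}$ formula $(\ref{nu12})$ collapses to $z_i=\theta^{ik+|i|}$, and then check the two convergence/divergence conditions: $(\ref{rfi})$ reads $\sum_{j\geq 1}\theta^{j(k+1)}h(j)=\tfrac{1-\theta}{1+\theta}\sum_{j\geq1}\theta^{j(k+2)}<\infty$ since $\theta<1$, while $(\ref{lin})$ reads $\sum_{j\geq1}\theta^{-j(k-1)}h(-j)=\tfrac{1-\theta}{1+\theta}\sum_{j\geq1}\theta^{-j(k-1)+j}=\tfrac{1-\theta}{1+\theta}\sum_{j\geq1}\theta^{-j(k-2)}$, which diverges for every $k\geq2$ because $\theta<1$ and $k-2\geq0$ (for $k=2$ it is the constant series). \emph{Second}, solution $(\ref{s2})$ is handled symmetrically: it is the case $l_0<+\infty$, $r_0=+\infty$, one uses $(\ref{nu12b})$, conditions $(\ref{lfi})$ and $(\ref{rin})$, and the same measure $(\ref{nuj})$; the computation is the mirror image of the first. \emph{Third}, solution $(\ref{s3})$ is the case $l_0=r_0=+\infty$ with $\rho=1$; here I cite Remark~\ref{R} to get that $(\ref{tau})$ holds with $\rho=1$ because $h$ is symmetric, then note that $(\ref{scl})$ holds by choosing any $s\leq\frac{k-2}{2}$ and $(\ref{scr})$ by choosing any $t\geq\frac{k+2}{2}$ (such integers exist for every $k\geq 2$), and finally substitute $\rho=1$ into $(\ref{nu12c})$ together with $h(i)/h(0)=\theta^{|i|}$ to obtain $z_i=\theta^{|i|}\bigl(\tfrac{\theta^i+\theta^{-i}}{2}\bigr)^k$.

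\emph{Finally}, I would note that by construction each $z$ is non-normalisable (at least one of $l_0,r_0$ is $+\infty$ in every case), and none of them is height-periodic — indeed the coordinates of $(\ref{s1})$ and $(\ref{s2})$ tend to $0$ or $\infty$ along the appropriate directions, and similarly for $(\ref{s3})$ — so by F3 each yields a non-probability Gibbs measure through $(\ref{BoundMC})$, and since $(\ref{s1})$, $(\ref{s2})$, $(\ref{s3})$ are three distinct vectors, they produce at least three such measures. The only point needing a little care is the claim of \emph{distinctness} of the resulting measures rather than merely of the solutions; I expect this to be the main (mild) obstacle, and I would address it by observing that $(\ref{BoundMC})$ with different boundary laws $l_{xy}$ assigns different weights to the generating cylinder events (e.g.\ by comparing the ratios $\mu(\sigma_x=1)/\mu(\sigma_x=-1)$, which are $\theta^{k+1}$, $\theta^{-k+1}$ and $\theta$ respectively for the three solutions and hence pairwise distinct when $\theta\neq1$), so the three measures are genuinely different.
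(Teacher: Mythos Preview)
Your proposal is correct and follows exactly the paper's approach: the theorem is presented there as a summary of the three cases already worked out in Subsections~2.1--2.3, with no separate proof beyond the appeal to fact~F3, and your write-up simply collects those verifications (checking (\ref{lin})--(\ref{rfi}), (\ref{lfi})--(\ref{rin}), and (\ref{tau})--(\ref{scr}) for the measure (\ref{nuj})) in the same order.

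One small remark: your closing distinctness argument actually goes \emph{beyond} the paper, which never justifies the phrase ``at least three''. The idea of comparing cylinder weights from (\ref{BoundMC}) is the right one, but the specific numbers you quote are off---for instance the ratio $z_1/z_{-1}$ equals $\theta^{2k}$, $\theta^{-2k}$, and $1$ for (\ref{s1}), (\ref{s2}), (\ref{s3}) respectively, not $\theta^{k+1}$, $\theta^{-k+1}$, $\theta$---so that sentence would need a minor correction, though the conclusion (pairwise distinctness for $\theta<1$) is of course unchanged.
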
 	  
More general conditions on $\theta$ and probability measure $h$ are given in (\ref{lin}), (\ref{rfi}), (\ref{lfi}), (\ref{rin}), and (\ref{tau})-(\ref{scr}). Taking an arbitrary $\theta$ and $h$ one can check these conditions and obtain several solutions (depending on values of parameter $\rho$) and corresponding non-probability Gibbs measures. 

\section{Difference equation corresponding to system (\ref{di1}).}
\begin{pro}\cite{HR} \label{pps: 1} Assume $h(0)=1$.
	A vector $\mathbf z=(z_i,i\in \mathbb Z)$, with $z_0=1$,  is a solution to (\ref{di1})
	if and only if for $u_i=\sqrt[k]{{z_i\over h(i)}}$ the following holds
	\begin{equation}\label{Va}
		h(i)u_i^k=\frac{{u_{i-1}+u_{i+1}-\tau u_i}}{u_{-1}+u_{1}-\tau}, \quad i\in \mathbb Z,
	\end{equation}
	where $\tau=\theta^{-1}+\theta=2\cosh(\beta)$.
\end{pro}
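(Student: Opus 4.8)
The whole statement rests on one elementary fact — a discrete ``resolvent identity'' for the convolution kernel $\theta^{|\cdot|}$ — and I would organise the argument around it. Since $h(0)=1$ and $z_0=1$, equation (\ref{di1}) is equivalent to
\begin{equation*}
z_i=h(i)\Big(\tfrac{S_i}{S_0}\Big)^{k},\qquad i\in\mathbb Z,\qquad\text{where } S_i:=\sum_{j\in\mathbb Z}\theta^{|i-j|}z_j ,
\end{equation*}
because the term $j=0$ contributes $\theta^{|i|}$ to the numerator, $S_0$ is exactly the denominator, and for $i=0$ the identity reads $z_0=1$. In particular, whenever $\mathbf z$ solves (\ref{di1}) every $S_i$ is finite and positive and $u_i=\sqrt[k]{z_i/h(i)}=S_i/S_0$. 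The first step is then to prove, for any sequence with $S_i<\infty$, that
\begin{equation*}
S_{i-1}+S_{i+1}-\tau S_i=(\theta-\theta^{-1})\,z_i,\qquad i\in\mathbb Z .
\end{equation*}
I would obtain this by splitting $\sum_j$ into the term $j=i$ and the rest: for $j\ne i$ one has $\{|i-1-j|,\,|i+1-j|\}=\{|i-j|-1,\,|i-j|+1\}$, so $\theta^{|i-1-j|}+\theta^{|i+1-j|}=(\theta+\theta^{-1})\theta^{|i-j|}=\tau\,\theta^{|i-j|}$, while the term $j=i$ contributes $2\theta-\tau=\theta-\theta^{-1}$.

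For the implication ``$\mathbf z$ solves (\ref{di1}) $\Rightarrow$ (\ref{Va})'' I would just divide the resolvent identity at index $i$ by the one at index $0$. Using $z_0=1$, $u_0=1$ and $u_i=S_i/S_0$ this yields
\begin{equation*}
\frac{u_{i-1}+u_{i+1}-\tau u_i}{u_{-1}+u_1-\tau}=\frac{(\theta-\theta^{-1})z_i/S_0}{(\theta-\theta^{-1})z_0/S_0}=z_i=h(i)u_i^k ,
\end{equation*}
which is (\ref{Va}); the denominator $u_{-1}+u_1-\tau=(\theta-\theta^{-1})/S_0$ is nonzero because $J,\beta>0$ force $\theta=\exp(-J\beta)\in(0,1)$, so $\theta-\theta^{-1}\ne0$, and $0<S_0<\infty$.

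For the converse I would run the same computation backwards. Given a positive sequence $(u_i)$ satisfying (\ref{Va}) (hence $u_0=1$ and $c:=u_{-1}+u_1-\tau\ne0$), set $z_i:=h(i)u_i^k$; then (\ref{Va}) reads $u_{i-1}+u_{i+1}-\tau u_i=c\,z_i$ for every $i$. Assuming the convolution sums converge — which is implicit in ``$\mathbf z$ is a solution of (\ref{di1})'' and, by Lemma \ref{l1}, amounts to $l_0,r_0<\infty$ — the sequence $S_i=\sum_j\theta^{|i-j|}z_j$ obeys the resolvent identity, so $w_i:=S_i-\tfrac{\theta-\theta^{-1}}{c}\,u_i$ solves the homogeneous recursion $w_{i-1}+w_{i+1}-\tau w_i=0$, whose solution space (characteristic roots $\theta,\theta^{-1}$, distinct since $\theta\ne1$) is spanned by $\theta^{i}$ and $\theta^{-i}$. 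If $w\equiv0$, then $S_i=\tfrac{\theta-\theta^{-1}}{c}u_i$ for all $i$; evaluating at $i=0$ identifies the constant as $S_0$, so $u_i=S_i/S_0$, i.e.\ $z_i=h(i)(S_i/S_0)^k$, which is exactly (\ref{di1}).

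\textbf{The main obstacle} is precisely this last point: showing that the homogeneous part $w_i=A\theta^i+B\theta^{-i}$ vanishes. Since $0<\theta<1$, one mode blows up as $i\to+\infty$ and the other as $i\to-\infty$, whereas $S_i=\sum_j\theta^{|i-j|}z_j$ (a sum of positive terms) and $u_i$ have their growth controlled one-sidedly by the convergence of $\sum_j\theta^{|j|}z_j$. I would close the gap by a growth comparison carried out separately at $i\to+\infty$ and $i\to-\infty$ — extracting from $S_0<\infty$ the requisite decay of $z_j$, bounding $S_i$ against $\theta^{\pm i}$, and invoking positivity of $u_i$ on all of $\mathbb Z$ together with the normalisation $u_0=z_0=1$ to rule out a nonzero $A$ or $B$. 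Everything else in the proof is bookkeeping around the resolvent identity.
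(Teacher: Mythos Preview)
The paper does not prove this proposition at all: it is imported from \cite{HR} and stated without argument, so there is no ``paper's own proof'' to compare against. Your resolvent identity
\[
S_{i-1}+S_{i+1}-\tau S_i=(\theta-\theta^{-1})z_i
\]
is correct (the case split $j\ne i$ versus $j=i$ is exactly right), and the forward implication follows cleanly from it as you wrote.

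The converse, however, is where your plan is genuinely incomplete. To force $w_i=A\theta^i+B\theta^{-i}\equiv0$ you need $w_i\theta^{i}\to0$ as $i\to+\infty$ and $w_i\theta^{-i}\to0$ as $i\to-\infty$. For the $S_i$--part this can be done: from $|i-j|+i\ge |j|$ for $i>0$ and dominated convergence (dominating by $\theta^{|j|}z_j$, summable since $S_0<\infty$) one gets $S_i\theta^{i}\to0$. The problem is the $u_i$--part: positivity of $u_i$ on all of $\mathbb Z$ does \emph{not} by itself give $u_i\theta^{|i|}\to0$. What actually yields this is either (i) boundedness of $(u_i)$, which is exactly Proposition~\ref{pb} and requires the extra hypothesis $u_{-1}+u_1<\tau$ (i.e.\ $c<0$), or (ii) deducing $u_j^k=o(\theta^{-|j|})$ from $S_0<\infty$, which needs a lower bound on $h(j)$. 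Your sketch (``invoking positivity \dots\ to rule out a nonzero $A$ or $B$'') does not supply either ingredient. In particular, nothing in the hypotheses of the proposition excludes $c>0$, and if $c>0$ the sequence generated by the recursion can grow super-exponentially, which would make $S_0=\infty$ and invalidate the whole set-up; you need to argue that convergence of the sums in (\ref{di1}) already forces $c<0$, and then close the loop via boundedness. So the obstacle you flag is real, but the one-line plan you give for removing it is not yet a proof.
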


Rewrite (\ref{Va}) as (forward)
\begin{equation}\label{uf}
u_{i+1}=(u_{-1}+u_{1}-\tau)h(i)u_i^k
+\tau u_i- u_{i-1}, \quad i\in \mathbb Z,
\end{equation}
or (backward) form
\begin{equation}\label{ub}
	u_{i-1}=(u_{-1}+u_{1}-\tau)h(i)u_i^k
	+\tau u_i- u_{i+1}, \quad i\in \mathbb Z.
\end{equation}
Therefore, it suffices to study (\ref{uf}) for $i\geq -1$.
Moreover, following \cite{HR} and \cite{HKLR} we are interested to $u_i>0$, $i\geq -1$ which satisfies
\begin{equation}\label{uu}
	u_{i+1}=(u_{-1}+u_{1}-\tau)h(i)u_i^k
	+\tau u_i- u_{i-1}, \quad i=0,1,2,\dots,
\end{equation}
with initial conditions:
\begin{equation}\label{ui}
	h(0)=u_0=1, \ \ u_{-1}+u_1<\tau.
\end{equation}	

Note that for $i=0$ the above equation is trivially fulfilled for all values of $u_1$ and $u_{-1}$.

{\bf The main problem} is to find $u_1$ and $u_{-1}$ such that the sequence  $\{u_i\}_{i=-1}^\infty$ generated by (\ref{uu}) and (\ref{ui}) is strictly positive. 

\begin{pro}\label{pb} Let $k\geq 2$. If a strictly positive sequence $\{u_i\}_{i=-1}^\infty$ 
	satisfies (\ref{uu}), (\ref{ui}) then it is bounded.
\end{pro}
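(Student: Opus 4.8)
The natural first move is to exploit the one structural feature of (\ref{uu}) that matters: by (\ref{ui}) the number $A:=u_{-1}+u_1-\tau$ is strictly negative, so with $|A|=\tau-u_{-1}-u_1>0$ equation (\ref{uu}) reads
\[
u_{i+1}+u_{i-1}=\tau u_i-|A|\,h(i)\,u_i^{k},\qquad i\ge 0 ,
\]
and, all $u_j$ and $h(j)$ being positive, the nonlinear term is a genuine loss: $u_{i+1}+u_{i-1}<\tau u_i$ for every $i\ge 0$. Two elementary consequences will be used repeatedly: $u_{i-1}<\tau u_i$, and (dividing by $u_i$ and using $u_{i+1}>0$) $|A|h(i)u_i^{k-1}<\tau$, i.e.\ the pointwise bound $u_i^{\,k-1}<\tau/(|A|h(i))$.

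Passing to the ratios $t_i:=u_{i+1}/u_i>0$ turns the displayed identity into
\[
t_i=\tau-\frac1{t_{i-1}}-|A|\,h(i)\,u_i^{k-1}\;<\;g(t_{i-1}),\qquad g(t):=\tau-\tfrac1t .
\]
Since $g$ is increasing, $g<\tau$, the equation $g(t)=t$ has roots $\theta,\theta^{-1}$ and $g'(\theta^{-1})=\theta^{2}<1$, the iterates $g^{(n)}(\tau)$ decrease monotonically to $\theta^{-1}$; hence $t_i<g^{(i)}(\tau)$ and $\limsup_i t_i\le\theta^{-1}$, while $u_{i-1}<\tau u_i$ gives $t_i>1/\tau$. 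So $(u_i)$ grows at most geometrically with rate tending to $\theta^{-1}$, and if $\limsup_i t_i<1$ it is eventually geometrically decreasing and we are done.

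The remaining task is to exclude growth with $\limsup_i t_i\in[1,\theta^{-1}]$. Here I would use $\tau=\theta+\theta^{-1}$ to split the recursion along its characteristic directions: with $P_i:=u_{i+1}-\theta^{-1}u_i$ and $Q_i:=u_{i+1}-\theta u_i$ one gets $P_i=\theta P_{i-1}-|A|h(i)u_i^{k}$ and $Q_i=\theta^{-1}Q_{i-1}-|A|h(i)u_i^{k}$, whence
\[
P_i=\theta^{\,i}\Big(P_0-|A|\!\sum_{j=1}^{i}\theta^{-j}h(j)u_j^{k}\Big),\qquad
Q_i=\theta^{-i}\Big(Q_0-|A|\!\sum_{j=1}^{i}\theta^{\,j}h(j)u_j^{k}\Big),
\]
and $u_{i+1}=(\theta^{-1}Q_i-\theta P_i)/(\theta^{-1}-\theta)$. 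The plan is to show that along any block on which $u_j$ is of order $\theta^{-j}$ the series $\sum_j\theta^{\,j}h(j)u_j^{k}$ must diverge, so that $Q_i$ — and, symmetrically, $-P_i$ — eventually changes sign, which pins $u_{i+1}$ between $\theta u_i$ and $\theta^{-1}u_i$ and, iterated, traps $(u_i)$ in a bounded window, contradicting unboundedness.

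The step I expect to be the real obstacle is exactly this divergence: when $h$ decays fast the defects $|A|h(j)u_j^{k}$ are individually small and $\sum_j\theta^{\,j}h(j)u_j^{k}$ may in fact converge, so the trapping argument need not close — indeed the proposition is easy when $\inf_j h(j)>0$ (immediately from $u_i^{k-1}<\tau/(|A|h(i))$), and the content is precisely in controlling light-tailed $h$. I would therefore first settle the case $\inf_j h(j)>0$, and then try to feed the bound $u_j^{k-1}<\tau/(|A|h(j))$ back into $\sum_j\theta^{\,j}h(j)u_j^{k}$ to locate the sharpest decay condition on $h$ under which divergence is still forced for every strictly positive solution.
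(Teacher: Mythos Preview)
The paper's entire proof is the one--line pointwise estimate you already wrote down: from $u_{i+1}>0$ and $A:=u_{-1}+u_1-\tau<0$ one gets $|A|\,h(i)\,u_i^{k-1}<\tau$, hence $u_i<\bigl(\tau/(|A|h(i))\bigr)^{1/(k-1)}$. Nothing further is argued; the paper simply invokes the case $h\equiv 1$ from \cite{Rjsp} and calls this its ``generalization''. That bound is uniform exactly when $\inf_{i\ge 0}h(i)>0$, which is the setting of \cite{Rjsp} and of every concrete computation carried out later in the paper (where $h$ is taken constant). So in the only regime the paper actually uses, you already have the full proof in your first paragraph, and the ratio/characteristic--direction apparatus is unnecessary.

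Your instinct that the light--tailed case does not close is not just a worry about the method --- the statement as written (no lower bound on $h$) is false. Fix $k\ge 2$, $0<\theta<1$, $\tau=\theta+\theta^{-1}$, pick $0<c<1-\theta$, and set
\[
u_i=\theta^{-i}\Bigl(1-\tfrac{c(1-\theta^{\,i})}{1-\theta}\Bigr)\quad(i\ge 0),\qquad u_{-1}=\theta .
\]
Then $u_0=1$, $u_{-1}+u_1=\tau-c\theta^{-1}<\tau$, each $u_i>0$, and $u_i\sim\bigl(1-\tfrac{c}{1-\theta}\bigr)\theta^{-i}\to\infty$. A direct computation gives $\tau u_i-u_{i+1}-u_{i-1}=c(\theta^{-1}-1)>0$ for every $i\ge 1$, so defining $h(0)=1$ and $h(i)=(1-\theta)/u_i^{\,k}>0$ for $i\ge 1$ makes (\ref{uu}) and (\ref{ui}) hold exactly. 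Hence no argument --- yours or the paper's --- can yield boundedness without an additional hypothesis such as $\inf_i h(i)>0$; the real content of the proposition is that easy case, which you dispatched immediately.
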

\begin{proof} This is generalization of Proposition 2 in  \cite{Rjsp}. The main reason for boundedness is that the coefficient $(u_{-1}+u_{1}-\tau)h(i)$ of $u_i^k$ is negative (by the condition on initial point) therefore, to have $u_{i+1}>0$
the value $u_i>0$ should be bounded.
\end{proof}

The equation (\ref{uu}) is a non-linear (for $k\geq 2$), second-order difference equation (see for example \cite{El}).

Some $q$-periodic and mirror symmetric solutions of (\ref{uu}) are found in recent paper \cite{Rjsp}. Here we are interested to non-periodic solutions.

Let us reduce the problem to a dynamical system. Denoting $x_{n}=u_n$ and $y_n=u_{n-1}$ the difference equation (\ref{uu}) can be reduced to the discrete-time dynamical system
\begin{equation}\label{ud} (x_{n+1}, y_{n+1})=F_n(x_n,y_n), \ \ n=0,1,2,\dots
\end{equation}
where $y_0+x_1<\tau$, $x_0=1$ and the operator $F_n:(x,y)\in \mathbb R^2\to F_n(x,y)=(x',y')\in \mathbb R^2$ is defined as
\begin{equation}\label{uo}
F_n: \ \	\begin{array}{ll}
	x'=(y_{0}+x_{1}-\tau)h(n)x^k+\tau x-y\\[2mm]
	y'=x
\end{array}
\end{equation}
\begin{rk}
	\begin{itemize}
		\item[1.] The dynamical system (\ref{ud}) is complicated, because it is non-linear (for any $k\geq 2$), the operator $F_n$ depends on initial point $(y_0,x_1)$ and depends on the time $n$.
		\item[2.] In case when $h(0)=1$, $h(n)$ is constant (i.e., independent on $n\geq 1$) then
		 $F_0$ is
		 \begin{equation}\label{u0}
			F_0: \ \	\begin{array}{ll}
				x'=(y_{0}+x_{1}-\tau)x^k+\tau x-y\\[2mm]
				y'=x.
			\end{array}
		\end{equation}
		and	 $F_n$ also does not depend on $n\geq 1$, i.e., $F_n=F$ with
		 \begin{equation}\label{un}
		 	F: \ \	\begin{array}{ll}
		 		x'=(y_{0}+x_{1}-\tau)hx^k+\tau x-y\\[2mm]
		 		y'=x.
		 	\end{array}
		 \end{equation}
Moreover,	$$F_0(x_0,y_0)=F_0(1,y_0)=(x_1,1).$$
For given $x_1>0$ we define the trajectory of point $(x_1,1)$ as
\begin{equation}\label{tx}
	(x_m,y_m)=F^m(x_1,1), \ \ m\geq 1.
\end{equation}
	 For $k=2$ the operator (\ref{un})
	 is similar to the H\'enon map, $H:\mathbb R^2\to \mathbb R^2$, defined by
	 $$H: \left\{\begin{array}{ll}
	 	x'=1+y-ax^2\\[2mm]
	 	y'=bx, \ \ \ \ a,b>0.
	 \end{array}\right.$$
	 which is quadratic map in dimension
	 two \cite[page 251]{De}. It is known that for some values of its parameters, the dynamics of
	 the H\'enon map is very complex, having infinitely many periodic points. This is one of the
	 most studied examples of dynamical systems that exhibit chaotic behavior.
	  \item[3.] For $k\geq 3$ the operator (\ref{un}) is known as a McMillan map (see \cite{FM}, \cite{JR}, \cite{JRv} and references therein). Below we will study dynamical system generated by (\ref{un}).
	\end{itemize}
	
\end{rk}

\subsection{Fixed points of (\ref{un})}
Fixed points are solutions to the following system of equations
 \begin{equation}\label{unf}
	\begin{array}{ll}
		x=(y_{0}+x_{1}-\tau)hx^k+\tau x-y\\[2mm]
		y=x.
	\end{array}
\end{equation}
It is easy to see that this equation has two solutions:
${\rm Fix}(F)=\{P_0, P_1\}$, with
\begin{equation}\label{fix}
 P_0=(0,0), P_1=(x^*, x^*), \ \ \mbox{where} \ \ x^*=\left({\tau-2\over h(\tau-y_0-x_1)}\right)^{{1\over k-1}}.
 \end{equation}
Now we shall examine the type of the fixed points.

\begin{defn}\label{d2} (see \cite{De}). A fixed point $v$ of an operator $M$ is called hyperbolic if
	its Jacobian $J$ at $v$ has no eigenvalues on the unit circle.
	
	A hyperbolic fixed point $v$ is called:
	\begin{itemize}
		\item attracting if all the eigenvalues of the Jacobi matrix $J(v)$ are less than 1 in
		absolute value;
		\item repelling if all the eigenvalues of the Jacobi matrix $J(v)$ are greater than 1 in
		absolute value;
		\item a saddle point otherwise.
	\end{itemize}
\end{defn}

To find the type of a fixed point of the operator (\ref{un}) we write
the Jacobi matrix:
$$J(x,y)=\left(\begin{array}{cc}
(y_{0}+x_{1}-\tau)hkx^{k-1}+\tau & -1\\[2mm]
1& 0
\end{array}	
\right).$$
Note that both fixed points are saddle (or non-hyperbolic), because the equation for eigenvalues is
$$\lambda^2-((y_{0}+x_{1}-\tau)hkx^{k-1}+\tau)\lambda+1=0$$
solutions of which satisfy $\lambda_1\lambda_2=1$.
The eigenvalues for $P_0$ are:
$$\lambda_{1,2}={1\over 2}\left(\tau\mp \sqrt{\tau^2-4}\right),$$
and for $P_1$ are:
$$\lambda_{1,2}={1\over 2}\left(2k-(k-1)\tau\mp 
\sqrt{(k-1)(\tau-2)[(k-1)\tau-2(k+1)]}\right).$$

Therefore we obtain the following proposition

\begin{pro}\label{pfp} For  $k\geq 2$ and  $\tau>2$ the following assertions hold
	\begin{itemize} 
		\item [1.] Eigenvalues corresponding to the fixed point $P_0$ are real and  $0<\lambda_1<1$, $\lambda_2>1$.
	\item[2.]	For the eigenvalues corresponding to $P_1$ the following hold
	\begin{itemize}
		\item[2.1.] If  $2<\tau<{2(k+1)\over k-1}$ then the eigenvalues are complex numbers, and (surprisingly) 	$|\lambda_{1,2}|=1$ independently on values of the parameters $\tau$ and $k$. 
		\item[2.2.]   If  $\tau={2(k+1)\over k-1}$ then 
		$\lambda_1=\lambda_2=-1$.
		\item[2.3.]  If  $\tau>{2(k+1)\over k-1}$ then both eigenvalues are real and $|\lambda_1|<1$, $|\lambda_2|>1$. 
	\end{itemize}
\end{itemize}
\end{pro}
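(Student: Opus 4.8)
The plan is to reduce the entire statement to the analysis of the quadratic characteristic polynomial of the Jacobian $J(x,y)$ at each of the two fixed points, exploiting the single structural identity $\det J(x,y)\equiv 1$. This identity is immediate from the anti-diagonal shape of $J$ (its second row is $(1,0)$, because the second component of the map (\ref{un}) is $y'=x$), and it forces the two eigenvalues at any fixed point to satisfy $\lambda_1\lambda_2=1$, as already observed in the excerpt. Consequently, at each fixed point the whole behaviour is controlled by one real number, the trace $T$ of $J$ at that point, through $\lambda^2-T\lambda+1=0$.

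First I would treat $P_0=(0,0)$. Since $k\ge2$, the entry $(y_0+x_1-\tau)hkx^{k-1}$ of $J$ vanishes at $x=0$, so $T=\tau$ and the characteristic equation is $\lambda^2-\tau\lambda+1=0$, with roots $\lambda_{1,2}=\tfrac12(\tau\mp\sqrt{\tau^2-4})$. For $\tau>2$ the discriminant is positive, the roots are real, and $\lambda_1\lambda_2=1$; the elementary inequalities $0<\tau-\sqrt{\tau^2-4}$ and $\tau-\sqrt{\tau^2-4}<2$ (the latter being equivalent, after squaring, to $(\tau-2)^2<(\tau-2)(\tau+2)$) give $0<\lambda_1<1<\lambda_2$. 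This settles item 1.

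Next I would compute the trace at $P_1=(x^*,x^*)$. Substituting the defining relation $(x^*)^{k-1}=\frac{\tau-2}{h(\tau-y_0-x_1)}$ from (\ref{fix}), the term $(y_0+x_1-\tau)hk(x^*)^{k-1}$ collapses to $-k(\tau-2)$, so $T=2k-(k-1)\tau$. The crucial step is then the factorization of the discriminant,
\[
T^2-4=(T-2)(T+2)=\big(2(k-1)-(k-1)\tau\big)\big(2(k+1)-(k-1)\tau\big)
=(k-1)(2-\tau)\big(2(k+1)-(k-1)\tau\big).
\]
Since $\tau>2$ forces $(k-1)(2-\tau)<0$, the sign of $T^2-4$ is opposite to that of $2(k+1)-(k-1)\tau$, and this trichotomy is exactly the case split $\tau\lessgtr\frac{2(k+1)}{k-1}$. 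In the range $2<\tau<\frac{2(k+1)}{k-1}$ the discriminant is negative, so the roots form a complex conjugate pair $\lambda,\bar\lambda$ with $\lambda\bar\lambda=\lambda_1\lambda_2=1$, hence $|\lambda_{1,2}|=1$ throughout that interval; this is item 2.1, and the ``surprising'' independence of the parameters is nothing but $\det J\equiv1$. At $\tau=\frac{2(k+1)}{k-1}$ a direct substitution gives $T=-2$, so $(\lambda+1)^2=0$ and $\lambda_1=\lambda_2=-1$ (item 2.2). For $\tau>\frac{2(k+1)}{k-1}$ one has $T<-2$; the roots are then real, and since their product is $1$ while their sum $T$ is negative, they are both negative and, being unequal (positive discriminant), satisfy $|\lambda_1|<1<|\lambda_2|$ (item 2.3).

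I do not expect a genuine obstacle: the work is essentially bookkeeping once the determinant identity is isolated. The one place that calls for care is the complex case 2.1 — the equality $|\lambda_{1,2}|=1$ must not be read as a fragile coincidence; it is a direct consequence of $\det J\equiv1$ together with the roots being mutually conjugate, and it is valid on the whole open interval $2<\tau<\frac{2(k+1)}{k-1}$. I would also double-check the endpoint arithmetic $T=2k-(k-1)\cdot\frac{2(k+1)}{k-1}=-2$ and note that $T=2k-(k-1)\tau$ is strictly decreasing in $\tau$, so that the three sign regimes of $T^2-4$ align precisely with the three cases in the statement.
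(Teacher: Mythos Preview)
Your proof is correct and follows essentially the same route as the paper: both analyze the characteristic polynomial $\lambda^2-T\lambda+1=0$ at each fixed point, and the case split for $P_1$ hinges on the same discriminant factorization $(k-1)(\tau-2)[(k-1)\tau-2(k+1)]$. The one notable difference is in item~2.1: the paper verifies $|\lambda_{1,2}|=1$ by directly expanding the squared modulus of the explicit eigenvalue formula, whereas you obtain it immediately from $\lambda_1\lambda_2=\det J\equiv 1$ together with $\lambda_2=\bar\lambda_1$, which is cleaner and demystifies the ``surprising'' parameter-independence noted in the statement.
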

\begin{proof}
	The proof follows from the formulas of $\lambda_{1,2}$ given above. Let us give proof of the item 2.1.  Under condition of this item the eigenvalues are complex:
	$$\lambda_{1,2}={1\over 2}\left(2k-(k-1)\tau\mp 
	i\sqrt{(k-1)(\tau-2)[2(k+1)-(k-1)\tau]}\right).$$
Therefore, simple computations show that
$$|\lambda_{1,2}|^2={1\over 4}\left((2k-(k-1)\tau)^2+\left((k-1)(\tau-2)[2(k+1)-(k-1)\tau]\right)^2\right)=1,$$
independently on parameters $\tau$ and $k$ satisfying the condition of this item.
\end{proof}
Thus $P_0$ is always saddle. But $P_1$ is saddle if $\tau>{2(k+1)\over k-1}$ and non-hyperbolic if $2<\tau\leq {2(k+1)\over k-1}$. 

   From the known theorem about stable and unstable manifolds (see \cite{De})
  we get the following result
  \begin{thm} For the saddle fixed points $P_i$ the following assertions hold
  	 \begin{itemize} 
  		\item[a.] There are curves
  		denoted by $W^{s}(P_0)$, $W^{s}(P_1)$ such that for any initial
  		vector $v\in {\displaystyle W^{s}(P_i)}$ one has $F(v)\in {\displaystyle W^{s}(P_i)}$ (invariance) and
  		$$\lim_{n\to\infty} F^n(v)=P_i, \, i=0,1.$$
  		\item[b.] There are curves denoted by  $W^{u}(P_0)$
  		(resp. $W^{u}(P_1)$) and a neighborhood $V(P_i)$ of $P_i$ such that for any initial vector $v\in W^{u}(P_i)\cap V(P_i)$, there exists $k=k(v)\in \mathbb N$ that $F^k(v)\notin V(P_i)$, $i=0,1$.
  	\end{itemize}
  \end{thm}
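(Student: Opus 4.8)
The plan is to read this assertion as the Hadamard--Perron (stable/unstable manifold) theorem for the diffeomorphism $F$ of (\ref{un}) applied at its hyperbolic saddle fixed points, so that the real work is just to check that the hypotheses of that theorem hold here. First I would verify that $F$ is a diffeomorphism of $\mathbb{R}^2$: it is polynomial, hence $C^\infty$, and the Jacobi matrix $J(x,y)$ computed above satisfies $\det J(x,y)=1$ for every $(x,y)$, so $F$ is everywhere a local diffeomorphism. It is in fact globally invertible, since writing $(x',y')=F(x,y)$ one reads off $x=y'$ and then $y=(y_0+x_1-\tau)h(y')^k+\tau y'-x'$, so $F^{-1}$ is again polynomial; thus $F$ is a polynomial automorphism of $\mathbb{R}^2$.

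Next I would invoke Proposition \ref{pfp}. In the saddle regime --- all $\tau>2$ for $P_0$, and $\tau>\frac{2(k+1)}{k-1}$ for $P_1$ --- the linearization $J(P_i)$ has one real eigenvalue $\lambda_1$ with $0<|\lambda_1|<1$ and one real eigenvalue $\lambda_2=1/\lambda_1$ with $|\lambda_2|>1$; let $E^s(P_i)$, $E^u(P_i)$ be the corresponding one-dimensional eigenlines. The stable manifold theorem for diffeomorphisms (see \cite{De}) then furnishes, for each such $P_i$, a neighborhood $V(P_i)$ and $C^\infty$ curves $W^s_{\mathrm{loc}}(P_i)\ni P_i$ and $W^u_{\mathrm{loc}}(P_i)\ni P_i$ inside $V(P_i)$, tangent to $E^s(P_i)$ and $E^u(P_i)$ respectively, such that $F\big(W^s_{\mathrm{loc}}(P_i)\cap V(P_i)\big)\subset W^s_{\mathrm{loc}}(P_i)$, such that $F^n(v)\to P_i$ exponentially for every $v\in W^s_{\mathrm{loc}}(P_i)$, and such that $W^u_{\mathrm{loc}}(P_i)$ is the local stable manifold of $F^{-1}$ at $P_i$.

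For part (a) I would set $W^s(P_i):=\bigcup_{n\ge 0}F^{-n}\big(W^s_{\mathrm{loc}}(P_i)\big)$; this is an injectively immersed curve, forward $F$-invariant by construction, and any $v\in W^s(P_i)$ has an iterate lying in $W^s_{\mathrm{loc}}(P_i)$, whence $F^m(v)\to P_i$ as $m\to\infty$. For part (b) I would take $W^u(P_i):=W^u_{\mathrm{loc}}(P_i)$ together with the same neighborhood $V(P_i)$: since $W^u_{\mathrm{loc}}(P_i)$ is tangent to the expanding line $E^u(P_i)$, in suitable local coordinates $F$ restricted to it is, up to higher-order terms, multiplication by $\lambda_2$ with $|\lambda_2|>1$, so for $v\in W^u(P_i)\cap V(P_i)$ with $v\ne P_i$ the quantity $\|F^n(v)-P_i\|$ grows geometrically as long as the orbit stays in $V(P_i)$; hence there is a first $k=k(v)\in\mathbb N$ with $F^k(v)\notin V(P_i)$.

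The one point that needs genuine care, beyond citing the theorem, is precisely the verification of its hypotheses: that $F$ is a true diffeomorphism (global invertibility, explicit here) and that each $P_i$ is actually hyperbolic, so the statement must be understood over the saddle ranges of $\tau$ isolated in Proposition \ref{pfp}. In the complementary range $2<\tau\le\frac{2(k+1)}{k-1}$ the point $P_1$ is non-hyperbolic (eigenvalues on the unit circle) and this argument fails; describing the local dynamics there would instead require center-manifold or normal-form analysis, which is why the result is stated and proved only in the hyperbolic case.
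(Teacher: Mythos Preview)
Your proposal is correct and follows exactly the route the paper takes: the paper's entire argument is the single sentence ``From the known theorem about stable and unstable manifolds (see \cite{De}) we get the following result,'' relying on Proposition~\ref{pfp} to supply the saddle hypothesis. You have simply filled in the verification of hypotheses (global invertibility of $F$, $\det J\equiv 1$, hyperbolicity in the stated parameter range) and the standard construction $W^s(P_i)=\bigcup_{n\ge 0}F^{-n}(W^s_{\mathrm{loc}}(P_i))$ that the paper leaves implicit.
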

  The curves $W^{s}(P_i)$ are known as stable and $W^{u}(P_i)$ as unstable manifolds  (see \cite{Ga}).\\

 {\bf Open problem:} Find explicit formula of the curves $W^{s}(P_i)$, $W^{u}(P_i)$.

\subsection{An invariant set} For given parameters and fixed $x_1$, $y_0$ we introduce
$$\psi(x)=(y_{0}+x_{1}-\tau)hx^k+\tau x,$$
$$a=\left({\tau-1\over h(\tau-y_0-x_1)}\right)^{1/(k-1)}.$$
\begin{pro} If $2<\tau\leq 1+{k^{k/(k-1)}\over k-1}$ then the set
$$I=\{(x,y)\in \mathbb R^2_+: 0\leq x\leq a, \ \ \max\{0, \psi(x)-a\}\leq y\leq \psi(x)\},$$ is invariant with respect to operator $F$ given in (\ref{un}), i.e., $F(I)\subset I$.
\end{pro}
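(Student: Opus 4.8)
The plan is to check $F(I)\subseteq I$ by a direct verification, the main preliminary being an analysis of the one-variable map $\psi$ on $[0,a]$. Since the initial condition forces $y_0+x_1<\tau$, the leading coefficient of $\psi$ is negative, so $\psi(x)=-cx^k+\tau x$ with $c:=h(\tau-y_0-x_1)>0$. Then $\psi(0)=0$; $\psi'(x)=\tau-ckx^{k-1}$, so $\psi$ is strictly increasing on $[0,x_{\max}]$ and strictly decreasing on $[x_{\max},\infty)$, where $x_{\max}=(\tau/(ck))^{1/(k-1)}$; from $ca^{k-1}=\tau-1$ one gets $\psi(a)=a$, $\psi>0$ on $(0,a]$, and $(x_{\max}/a)^{k-1}=\tau/(k(\tau-1))<1$ (using $\tau>2$, $k\ge 2$), hence $x_{\max}<a$. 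The role of the hypothesis becomes transparent if one looks at $g(x):=\psi(x)-x=-cx^k+(\tau-1)x$: one has $g(0)=g(a)=0$, $g>0$ on $(0,a)$, $g$ attains its maximum over $[0,a]$ at $x_g:=((\tau-1)/(ck))^{1/(k-1)}=a\,k^{-1/(k-1)}<a$, and an elementary computation gives $\max_{[0,a]}g=g(x_g)=a\,k^{-1/(k-1)}\cdot\tfrac{(\tau-1)(k-1)}{k}$, so that $g(x_g)\le a$ holds if and only if $(\tau-1)(k-1)\le k^{k/(k-1)}$, i.e.\ if and only if $\tau\le 1+\tfrac{k^{k/(k-1)}}{k-1}$. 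In other words, the hypothesis is equivalent to the assertion that $\psi(t)\le t+a$ for every $t\in[0,a]$, and this is the only way in which the upper bound on $\tau$ enters.

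Now fix $(x,y)\in I$ and put $(x',y'):=F(x,y)=(\psi(x)-y,\,x)$. The conditions $0\le x'\le a$ are immediate from the definition of $I$ (from $y\le\psi(x)$ one gets $x'\ge 0$, and from $y\ge\psi(x)-a$, which is part of $y\ge\max\{0,\psi(x)-a\}$, one gets $x'\le a$), and $y'=x\in[0,a]$. It therefore remains only to verify the two inequalities $x\le\psi(x')$ and $\psi(x')\le x+a$, i.e.\ that $y'=x$ lies between $\max\{0,\psi(x')-a\}$ and $\psi(x')$. Here the plan is to combine the equivalent form of the hypothesis from the first paragraph with the monotonicity structure of $\psi$: the upper inequality $\psi(x')\le x+a$ should follow from $\psi(x')\le x'+a$ (valid since $x'\in[0,a]$) once one has extracted, from membership of $(x,y)$ in $I$, a comparison of $x'$ with $x$; the lower inequality $x\le\psi(x')$ should follow from the fact that $\psi$ is increasing on $[0,x_{\max}]$ with $x_{\max}<a$, together with the constraint that $(x,y)\in I$ keeps $x'=\psi(x)-y$ from being too small relative to $x$. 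I would organize this by a short casework on $(x,y)\in I$ according to the sign of $\psi(x)-a$ and the position of $x'$ relative to $x_{\max}$, in each case reducing both inequalities to the inequality $\psi(t)\le t+a$ on $[0,a]$.

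The step I expect to be the crux is the verification of these last two inequalities, in particular the lower bound $x\le\psi(x')$, because $\psi$ vanishes at $0$ while $x$ may be as large as $a$; the argument must genuinely exploit the coupling between $x'=\psi(x)-y$ and $x$ enforced by $(x,y)\in I$, rather than treating $x'$ as a free point of $[0,a]$. Once this coupling is set up correctly, both inequalities reduce to the inequality $\psi(t)\le t+a$ on $[0,a]$ established in the first paragraph, which as noted is precisely equivalent to $2<\tau\le 1+\tfrac{k^{k/(k-1)}}{k-1}$, and the inclusion $F(I)\subseteq I$ follows.
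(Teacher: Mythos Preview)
Your overall strategy matches the paper's: a direct verification that $F$ maps $I$ into $I$, with the one-variable analysis of $\psi$ on $[0,a]$ doing the work. Your computation that the hypothesis on $\tau$ is equivalent to $\psi(t)\le t+a$ for all $t\in[0,a]$ is correct and is exactly what the paper establishes (via $\max_{[0,a]}(\psi(x)-x)=(\tau-1)\tfrac{k-1}{k}x_*\le a$). The check $0\le x'\le a$ is handled identically.

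The gap is in the step you yourself flag as the crux: the lower bound $x\le\psi(x')$. Your hope that membership $(x,y)\in I$ ``keeps $x'=\psi(x)-y$ from being too small relative to $x$'' is unfounded. The top edge $y=\psi(x)$ of $I$ gives $x'=\psi(x)-y=0$, hence $\psi(x')=0$, while $x$ may be any point of $(0,a]$. Concretely, take $k=2$, $h=1$, $\tau=3$, $y_0+x_1=2$, so that $\psi(x)=-x^2+3x$ and $a=2$; the hypothesis $2<\tau\le 5$ is satisfied, the point $(1,2)$ lies in $I$, but $F(1,2)=(0,1)$ does \emph{not}, since the requirement $y'\le\psi(x')$ reads $1\le 0$. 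No casework on the sign of $\psi(x)-a$ or the position of $x'$ relative to $x_{\max}$ can close this, and the inequality $x\le\psi(x')$ certainly cannot be reduced to $\psi(t)\le t+a$ (it points the other way). The paper's own argument, for what it is worth, checks the second-coordinate condition with $\psi(x)$ in place of $\psi(x')$ --- that is, it verifies $\max\{0,\psi(x)-a\}\le x\le\psi(x)$ for $x\in[0,a]$, which is true but is not the membership condition for $(x',y')\in I$ --- so it bypasses rather than resolves exactly the same difficulty you identify.
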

\begin{proof} We take an arbitrary $(x,y)\in I$ and show that $(x',y')=F(x,y)\in I$.  Since $y_0+x_1<\tau$ the function $\psi(x)$ is monotone increasing in $(0, \hat x)$, (where $\hat x=\hat x_0\sqrt[k-1]{1/k}$,  $\hat x_0=\sqrt[k-1]{\tau\over h(\tau-y_0-x_1)}$) and decreasing in $ (\hat x, +\infty)$. Note that $\hat x_0>\hat x$, $\psi(x)>0$ for all $x\in (0, \hat x_0)$,  $\psi(\hat x_0)=0$  and $\psi(x)<0$ for all $x>\hat x_0$.
	
	We have $a<\hat x_0$ and therefore $\psi(x)>0$ for all $x\in (0, a)$. Using these inequalities we obtain
	$$0\leq x'\leq a \ \ \Leftrightarrow \ \ 0\leq \psi(x)-y\leq a  \ \ \Leftrightarrow \ \ (\mbox{since} \ \ y\geq 0) \ \  \max\{0, \psi(x)-a\}\leq y\leq \psi(x). $$
	For the second coordinate we have
$$\max\{0, \psi(x)-a\}\leq y'\leq \psi(x)  \ \ \Leftrightarrow \ \ \max\{0, \psi(x)-a\}\leq x\leq \psi(x) \ \ \Leftrightarrow \ \ $$
$$ \left\{\begin{array}{lll}
	\psi(x)-x\geq 0 \\[2mm]
	 \psi(x)-a-x\leq 0\\[2mm]
	 \psi(x)-a\geq 0
	 \end{array}\right. \ \ \mbox{or} \ \ \left\{\begin{array}{lll}
	 \psi(x)-x\geq 0 \\[2mm]
	 x\geq 0\\[2mm]
	 \psi(x)-a\leq 0
 \end{array}\right.
 $$
 Note that $\psi(x)-x\geq 0$ iff $x\in [0, a]$. Moreover,
 if $2<\tau\leq 1+{k^{k/(k-1)}\over k-1}$ then
  $$\max_{x\in [0, a]}(\psi(x)-x)=\psi(x_*)-x_*=(\tau-1){k-1\over k}x_*<a,$$
  where
  $$x_*=\left({\tau-1\over hk(\tau-y_0-x_1)}\right)^{1/(k-1)}.$$
  Hence, $ \psi(x)-a-x\leq 0$, and the proof is completed.
\end{proof}
\subsection{Inverse of the operator (\ref{un})}		
Recall the following definition. Let
$f\colon X\to X$, $g\colon Y\to Y$, and $\ell\colon Y\to X$ are continuous functions on topological spaces, $X$ and $Y$.

The mapping $f$ is called topologically semiconjugate to $g$ if $\ell$ is a surjection such that $f\circ \ell=\ell \circ g$.

$f$ and $g$ are topologically conjugate if they are topologically semiconjugate and $\ell$ is a homeomorphism.
\begin{pro}
	Operator (\ref{un}) is invertible and its inverse
is	\begin{equation}\label{uni}
		F^{-1}: \ \	\begin{array}{ll}
			x'=y\\[2mm]
			y'=(y_{0}+x_{1}-\tau)hy^k+\tau y-x.
		\end{array}
	\end{equation}
Moreover, the operator $F$ is topological conjugate to its inverse.
\end{pro}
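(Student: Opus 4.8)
The claim has two parts: that the operator $F$ of \eqref{un} is invertible with inverse given by \eqref{uni}, and that $F$ is topologically conjugate to $F^{-1}$. The first part is a direct computation. Writing $F(x,y)=(x',y')$ with $x'=(y_0+x_1-\tau)hx^k+\tau x-y$ and $y'=x$, I would simply solve this system for $(x,y)$ in terms of $(x',y')$: from the second equation $x=y'$, and substituting into the first gives $y=(y_0+x_1-\tau)h(y')^k+\tau y'-x'$. Relabelling $(x',y')$ as the input variable $(x,y)$ yields exactly \eqref{uni}. One then checks $F^{-1}\circ F=\mathrm{id}$ and $F\circ F^{-1}=\mathrm{id}$ by substitution; since $F$ and $F^{-1}$ are polynomial maps of $\R^2$ this establishes that $F$ is a polynomial automorphism of $\R^2$.

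For the conjugacy, the natural candidate is the coordinate-swap involution $R\colon(x,y)\mapsto(y,x)$, which is a homeomorphism (indeed a linear involution, $R^{-1}=R$). The plan is to verify the intertwining relation $R\circ F=F^{-1}\circ R$, or equivalently $F^{-1}=R\circ F\circ R$. This is again a one-line computation: $R\circ F\circ R$ sends $(x,y)$ first to $(y,x)$, then applies $F$ to get $\bigl((y_0+x_1-\tau)hy^k+\tau y-x,\ y\bigr)$, and then swaps coordinates to obtain $\bigl(y,\ (y_0+x_1-\tau)hy^k+\tau y-x\bigr)$, which is precisely $F^{-1}(x,y)$ as given in \eqref{uni}. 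Hence $F$ and $F^{-1}$ are topologically conjugate via $\ell=R$, which is the required homeomorphism in the definition recalled just before the proposition.

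There is essentially no obstacle here beyond bookkeeping: the only point worth stressing is \emph{why} the swap works, namely that the difference equation \eqref{uu} is \emph{reversible} in the sense of dynamical systems — running it backwards is the same as running it forwards after exchanging the roles of $u_{i-1}$ and $u_{i+1}$, which at the level of the phase variables $(x_n,y_n)=(u_n,u_{n-1})$ is exactly the transposition $R$. I would remark that the same reversibility underlies the symmetry $\lambda_1\lambda_2=1$ of the eigenvalues observed in Proposition \ref{pfp}, so the conjugacy is not a coincidence but a structural feature of McMillan-type maps. If desired, one can also note that the fixed points $P_0,P_1$ lie on the diagonal $\{x=y\}$, which is the fixed-point set of $R$, consistent with $R$ mapping $\mathrm{Fix}(F)$ to $\mathrm{Fix}(F^{-1})=\mathrm{Fix}(F)$.
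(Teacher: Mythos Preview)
Your proof is correct and follows essentially the same approach as the paper: solve the second equation for $x=y'$, substitute into the first to read off $F^{-1}$, and then exhibit the coordinate-swap involution $\ell(x,y)=(y,x)$ as the conjugating homeomorphism. The paper writes the conjugacy as $F=\ell\circ F^{-1}\circ\ell$ while you write the equivalent $F^{-1}=R\circ F\circ R$, and your additional remarks on reversibility and the eigenvalue symmetry are extras not present in the paper's brief argument.
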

\begin{proof}
	To find inverse of the operator (\ref{un}) from the second equation of the operator one can find $x=y'$ and then putting it in the first equation one gets
	$$y= (y_{0}+x_{1}-\tau)h(y')^k+\tau y'-x'.$$
Thus we get $F^{-1}$. To see that $F$ and $F^{-1} $	are conjugate
take mapping $\ell: \mathbb R^2\to \mathbb R^2$ defined by
$\ell(x,y)=(y,x)$, i.e., permutation of the coordinates. Note that $\ell^{-1}=\ell$.
Then it is easy to see that
$$F(x,y)=\ell\circ F^{-1} \circ \ell (x,y).$$
\end{proof}
\section{Bifurcations}
It is known (see \cite{Ga}, \cite{Kub}) that if the repeating eigenvalue is in absolute value equal to 1 (as in our case $\lambda_1=\lambda_2=-1$) then the system is characterized by a continuum of unstable equilibrium. This non-generic case represents the bifurcation point of the dynamical system. Meaning that an infinitesimal change in the value of the eigenvalue brings about a qualitative change in the nature of the dynamical system.

In this section following \cite{Ku}, \cite{Kub}  we study bifurcations of the dynamical system (\ref{un}).

\subsection{1:2 resonance} In the case $\lambda_1=\lambda_2=-1$ the dynamical system is characterized as $1: 2$ resonance (see page 415 of \cite{Kub})). This case appears for the fixed point $P_1$ (see Proposition \ref{pfp}). Changing $x$ and $y$ by $x-x^*$ and $y-x^*$ (see (\ref{fix})) respectively, we can use Lemma 9.8 of \cite{Kub}, which states that the normal form map for $1: 2$ resonance is the map (denoted by $\Gamma_{\beta}(\xi)$)
$$
\Gamma_{\beta} : \left(\begin{array}{l}
	\xi_{1} \\
	\xi_{2}
\end{array}\right) \mapsto\left(\begin{array}{cc}
	-1 & 1 \\
	\beta_{1} & -1+\beta_{2}
\end{array}\right)\left(\begin{array}{c}
	\xi_{1} \\
	\xi_{2}
\end{array}\right)+\left(\begin{array}{c}
	0 \\
	C(\beta) \xi_{1}^{3}+D(\beta) \xi_{1}^{2} \xi_{2}
\end{array}\right)+O\left(\|\xi\|^{4}\right),
$$
where $\beta_{1}$ and $\beta_2$ are parameters and   $C(\beta)$ and $D(\beta)$ are smooth functions.

 Note that the linear part of $\Gamma_\beta$, for $\beta=0$,
has negative eigenvalues, therefore it can not by approximated by a flow. But the second iterate, $\xi \mapsto \Gamma_{\beta}^{2}(\xi),$
can be approximated by the unit-time shift of a flow. The map $\Gamma_{\beta}^{2}$ has the form (see \cite{Kub}):
$$
\left(\begin{array}{l}
	\xi_{1} \\
	\xi_{2}
\end{array}\right) \mapsto\left(\begin{array}{cc}
	1+\beta_{1} & -2+\beta_{2} \\
	-2 \beta_{1}+\beta_{1} \beta_{2} & 1+\beta_{1}-2 \beta_{2}+\beta_{2}^{2}
\end{array}\right)\left(\begin{array}{l}
	\xi_{1} \\
	\xi_{2}
\end{array}\right)+\left(\begin{array}{c}
	V(\xi, \beta) \\
	W(\xi, \beta)
\end{array}\right),
$$
where $	V(\xi, \beta)$, $W(\xi, \beta)$ are cubic polynomials of two-variables $\xi_1$, $\xi_2$.
otherwise, reverse time.

In page 424 of \cite{Kub}, under some assumptions, the system is approximated by
$$
\left\{\begin{array}{l}
	\dot{\zeta}_{1}=\zeta_{2} \\
	\dot{\zeta}_{2}=\varepsilon_{1} \zeta_{1}+\varepsilon_{2} \zeta_{2}+s \zeta_{1}^{3}-\zeta_{1}^{2} \zeta_{2},
\end{array}\right.
$$
where $s=\pm 1$.
Moreover, in \cite{Kub} the bifurcation diagrams of this system for $s=1$ and $s=-1$ are presented. These general results are true for our system (\ref{un}) for $\lambda_1=\lambda_2=-1$.
In Figure \ref{fig-11} a trajectory is given.
\begin{figure}[h!]
	\includegraphics[width=8cm]{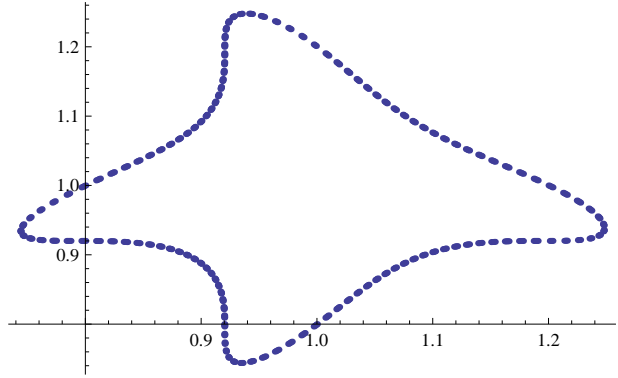}
	\caption{ The trajectory for the case $k=3$, $h=1$, $\tau=4$, $x_0=1$, $y_0=1.2$, $x_1=0.8$, the number of steps is $n=500$. In this case $\lambda_1=\lambda_2=-1.$}\label{fig-11}
\end{figure}

\subsection{Neimark-Sacker bifurcation} Let $2<\tau<{2(k+1)\over k-1}$.   In this case the fixed point $P_1$ has simple critical eigenvalues $\lambda_{1,2}=e^{\pm i \theta_{\tau}}$, where
$$\theta_\tau:= \arctan\left({2k-(k-1)\tau\over
	\sqrt{(k-1)(\tau-2)(2(k+1)-(k-1)\tau)}}\right).$$

We note that 
$$
e^{i q \theta_{\tau}}-1 = 0 \ \ \Leftrightarrow \ \ \tau={2k\over k-1}.
$$
This case is known as a strong resonances \cite{Kub}.

Assume $\tau\in \left(2, {2(k+1)\over k-1} \right)\setminus\{{2k\over k-1}\}$, then by results of Chapters 3-4 in \cite{Kub} there is a unique closed invariant curve around the fixed point when the parameter $\tau$ crosses the critical value. Below we give several pictures of trajectories of the dynamical system.

\section{Some trajectories of the operator (\ref{un})}	
In general, it is very difficult to study limit behavior of each trajectory of the operator (\ref{un}). 

\subsection{Case $h=1$} For $h=1$, i.e., zero external field, the following numerical results (see Fig. \ref{fig-1}-\ref{fig-6}) show that  trajectories, defined in (\ref{tx}), do not have any limit point. Moreover, each trajectory consists a dense subset of a closed curve.    
\begin{figure}[H]
	\includegraphics[width=8cm]{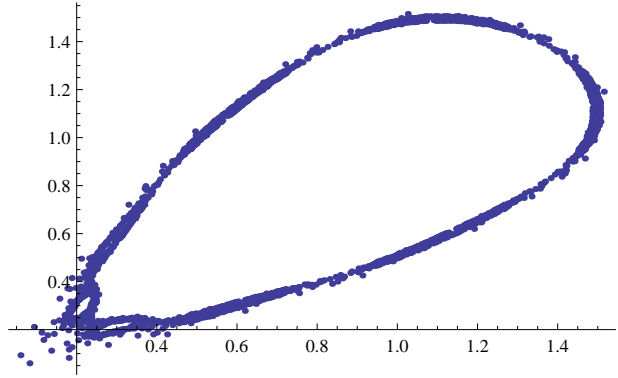}
	\caption{The trajectory for the case $k=2$, $h=1$, $\tau=3$, $x_0=1$, $y_0=0.5$,  $x_1=1.48589$, shown $n=3000$ iterations.}\label{fig-1}
\end{figure}

\begin{figure}[H]
	\includegraphics[width=8cm]{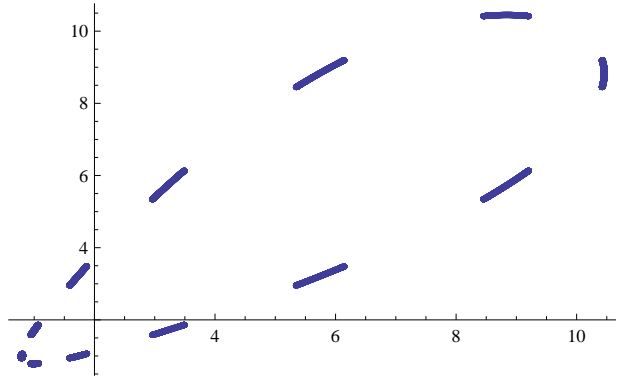}
	\caption{ The trajectory for the case $k=2$, $h=1$, $\tau=2.6$, $x_0=1$, $y_0=0.8$, $x_1=1.713$, $n=10000$.}\label{fig-2}
\end{figure}

\begin{figure}[H]
	\includegraphics[width=8cm]{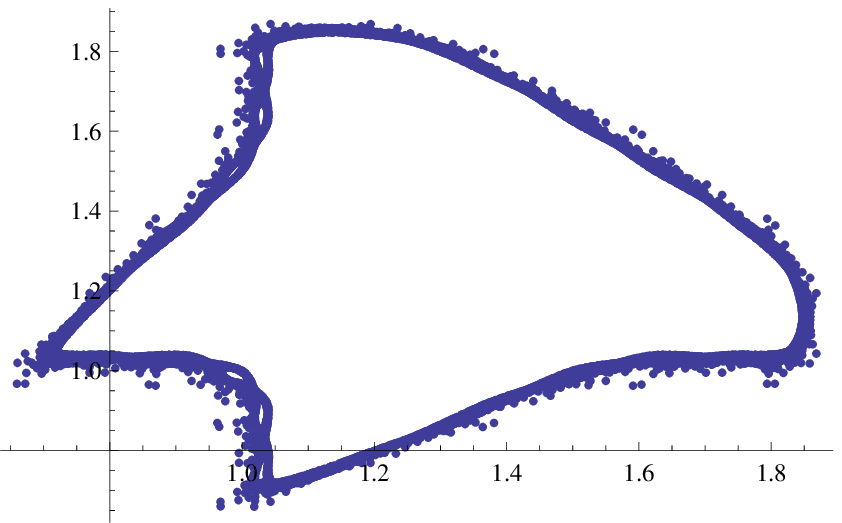}
	\caption{ The trajectory for the case $k=2$, $h=1$, $\tau=4$, $x_0=1$, $y_0=1.5$, $x_1=1$, $n=10000$.}\label{fig-3}
\end{figure}

\begin{figure}[H]
	\includegraphics[width=8cm]{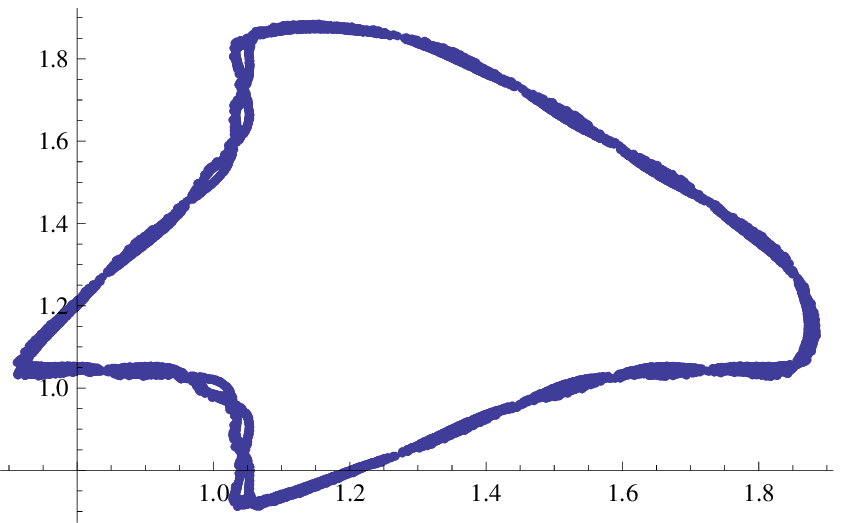}
	\caption{ The trajectory for the case $k=2$, $h=1$, $\tau=4$, $x_0=1$, $y_0=1.5$, $x_1=1.02$, $n=10000$.}\label{fig-4}
\end{figure}

\begin{figure}[H]
	\includegraphics[width=8cm]{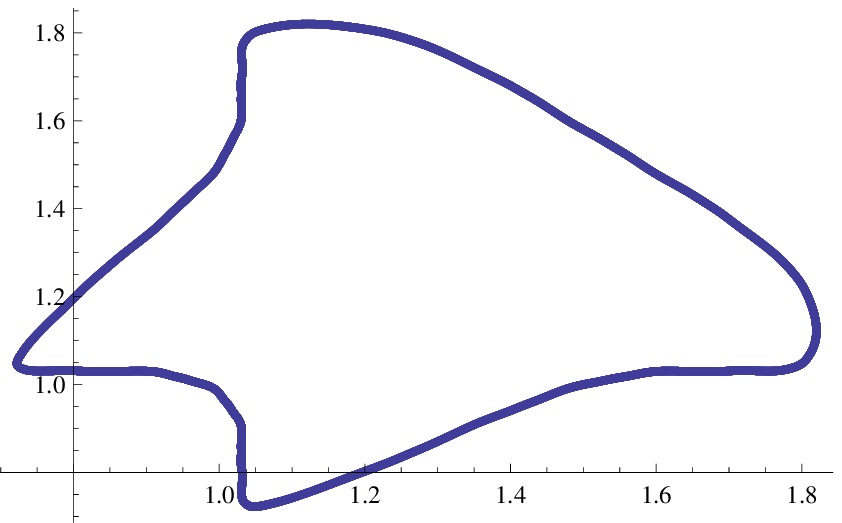}
	\caption{ The trajectory for the case $k=2$, $h=1$, $\tau=4$, $x_0=1$, $y_0=1.5$, $x_1=0.98$, $n=10000$.}\label{fig-5}
\end{figure}
\begin{figure}[H]
	\includegraphics[width=8cm]{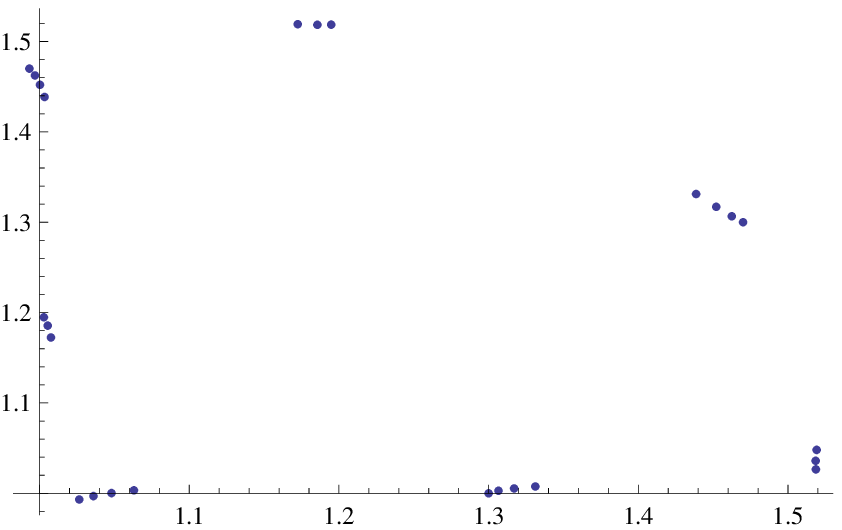}
	\caption{ The trajectory for the case $k=2$, $h=1$, $\tau=4.5$, $x_0=1$, $y_0=1.2$, $x_1=1.3$, $n=25$.}\label{fig-8}
\end{figure}
\begin{figure}[H]
	\includegraphics[width=8cm]{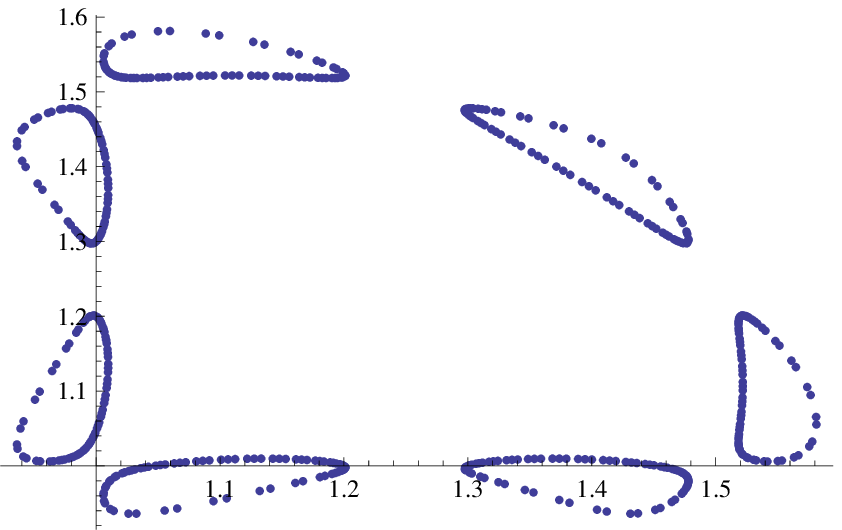}
	\caption{ The trajectory for the case $k=2$, $h=1$, $\tau=4.5$, $x_0=1$, $y_0=1.2$, $x_1=1.3$, $n=500$.}\label{fig-7}
\end{figure}

\begin{figure}[H]
	\includegraphics[width=8cm]{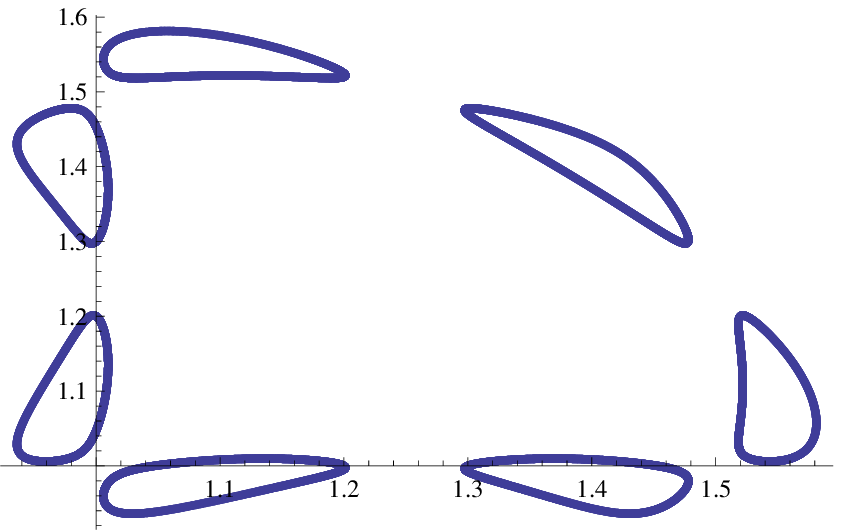}
	\caption{ The trajectory for the case $k=2$, $h=1$, $\tau=4.5$, $x_0=1$, $y_0=1.2$, $x_1=1.3$, $n=10000$.}\label{fig-6}
\end{figure}
\begin{figure}[H]
	\includegraphics[width=8cm]{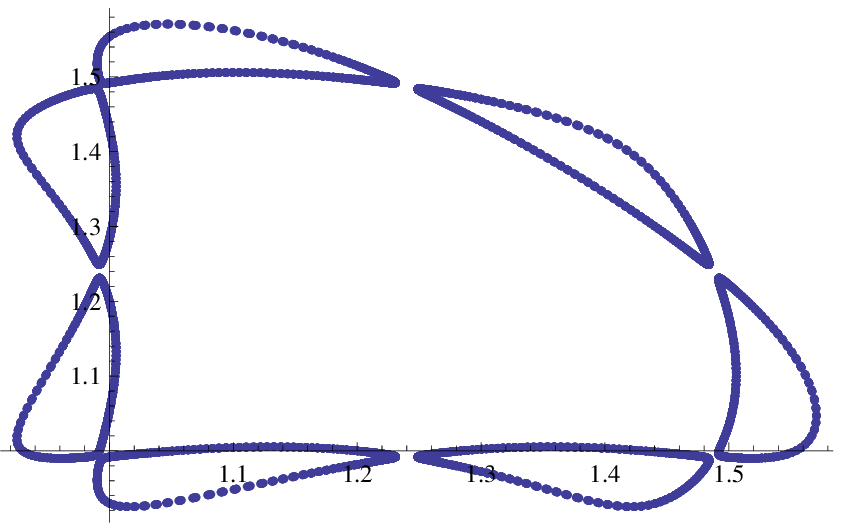}
	\caption{ The trajectory for the case $k=2$, $h=1$, $\tau=4.5$, $x_0=1$, $y_0=1.2$, $x_1=1.2838$, $n=10000$.}\label{fig-9}
\end{figure}
\begin{figure}[H]
	\includegraphics[width=8cm]{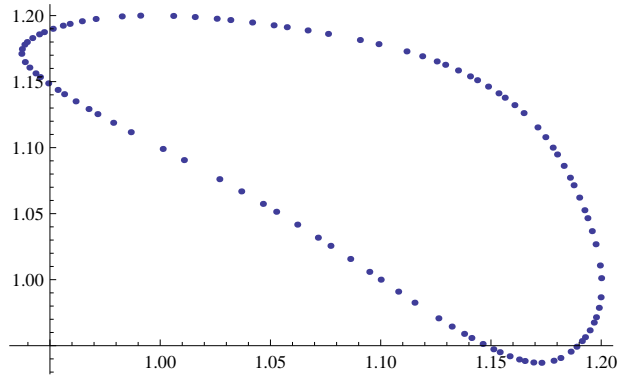}
	\caption{ The trajectory for the case $k=2$, $h=1$, $\tau=5.5$, $x_0=1$, $y_0=1.2$, $x_1=1.1$, $n=100$.}\label{fig-10}
\end{figure}

\subsection{Case $h\ne 1$}
In the case $h\ne 1$ one also can find such trajectories. See Fig. \ref{fig-12}.
\begin{figure}[H]
	\includegraphics[width=8cm]{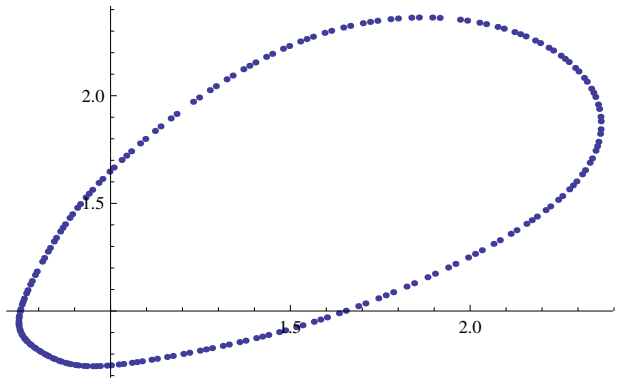}
	\caption{ The trajectory for the case $k=2$, $h=0.5$, $\tau=3$, $x_0=1$, $y_0=1.2$, $x_1=0.6$, $n=200$.}\label{fig-12}
\end{figure}
The following figure shows that there are initial values when the sequence after finitely many steps becomes negative (see Fig. \ref{fig-13}). 
\begin{figure}[H]
	\includegraphics[width=9cm]{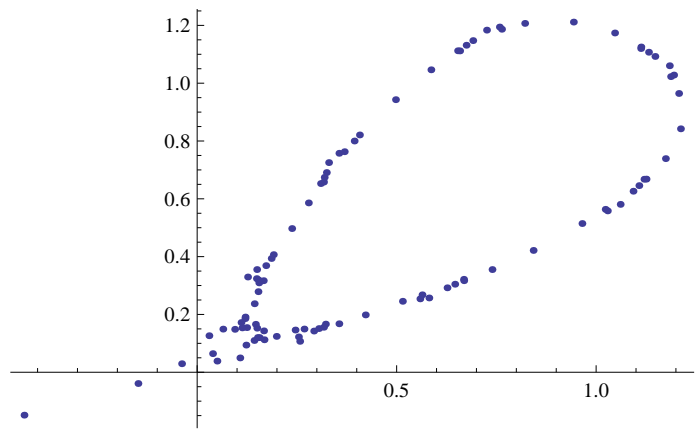}
	\caption{ The trajectory for the case $k=2$, $h=1.05$, $\tau=3$, $x_0=1$, $y_0=1.2$, $x_1=0.6$, $n=95$. This trajectory starting from step 93 goes to negative values. }\label{fig-13}
\end{figure}

\subsection{Gibbs measures of the positive trajectories (\ref{tx}).}
From numerical results of the previous subsections it follows that there are initial points having positive trajectory. Each positive trajectory belongs to a  closed curve which is invariant wish respect to the operator $F$.  In \cite{HKLR}, \cite{HR}, \cite{HKb}, \cite{Hphd} and \cite{Rjsp} many periodic trajectories are found. Trajectories shown in the previous subsections are not periodic,  moreover they do not define some normalisable boundary law. 
Therefore, each such trajectory defines a non-probability Gibbs measure.
  
\section*{ Acknowledgements}
I thank Institut des Hautes \'Etudes Scientifiques (IHES), Bures-sur-Yvette, France for support of his visit to IHES. 
The work was partially supported by a grant from the IMU-CDC.
This research is related to the fundamental project (number: F-FA-2021-425)  of The Ministry of Innovative Development of the Republic of Uzbekistan.

I thank professors I.Shparlinski and John Roberts for helpful discussions and providing related references.

\end{document}